\newcommand{\bbbn}{{\mathbb N}}
\newcommand{\bbbr}{{\mathbb R}}
\newcommand{\Idx}{{\mathcal I}}
\newcommand{\Jdx}{{\mathcal J}}
\newcommand{\ctI}{{\mathcal T}_{\Idx}}
\newcommand{\ctJ}{{\mathcal T}_{\Jdx}}
\newcommand{\lfI}{{\mathcal L}_{\Idx}}
\newcommand{\ctIJ}{{\mathcal T}_{\Idx\times\Jdx}}
\newcommand{\lfIJ}{{\mathcal L}_{\Idx\times\Jdx}}
\newcommand{\lfaIJ}{{\mathcal L}^+_{\Idx\times\Jdx}}
\newcommand{\lfiIJ}{{\mathcal L}^-_{\Idx\times\Jdx}}
\newcommand{\sons}{\mathop{\operatorname{sons}}\nolimits}
\newcommand{\supp}{\mathop{\operatorname{supp}}\nolimits}
\newcommand{\diam}{\mathop{\operatorname{diam}}\nolimits}
\newcommand{\dist}{\mathop{\operatorname{dist}}\nolimits}
\newtheorem{theorem}{Theorem}
\newtheorem{lemma}[theorem]{Lemma}
\newtheorem{definition}[theorem]{Definition}
\newtheorem{remark}[theorem]{Remark}
\newenvironment{proof}{\emph{Proof.}}{\strut\hfill $\Box$\medskip}
\newcommand{\qed}{}
\title{Approximation of integral operators by Green quadrature
       and nested cross approximation.}
\author{Steffen B\"orm \and Sven Christophersen}
\date{\today}
\begin{document}

\maketitle

\begin{abstract}
\noindent
We present a fast algorithm that constructs a data-sparse approximation
of matrices arising in the context of integral equation methods for
elliptic partial differential equations.

The new algorithm uses Green's representation formula in combination
with quadrature to obtain a first approximation of the kernel function,
and then applies nested cross approximation to obtain a more efficient
representation.

The resulting ${\mathcal H}^2$-matrix representation requires
${\mathcal O}(n k)$ units of storage for an $n\times n$ matrix, where
$k$ depends on the prescribed accuracy.
\end{abstract}

MSC: 65N38, 65N80, 65D30, 45B05.

Keywords: Boundary element method, hierarchical matrix,
          Green's function, quadrature, cross approximation.

\thanks{\noindent We gratefully acknowledge that part of this research
  was supported by the Deutsche Forschungsgemeinschaft in the context of
  project BO~3289/2-1.}

\section{Introduction}

We consider integral equations of the form
\begin{align*}
  \int_\Omega g(x,y) u(y) \,dy &= f(x) &
  &\text{ for almost all } x\in\Omega.
\end{align*}
In order to solve these equations numerically, we choose
a trial space $\mathcal{U}_h$ and a test space $\mathcal{V}_h$ and
look for the Galerkin approximation $u_h\in\mathcal{U}_h$ satisfying
the variational equation
\begin{align*}
  \int_\Omega v_h(x) \int_\Omega g(x,y) u_h(y) \,dy\,dx
  &= \int_\Omega v_h(x) f(x) \,dx &
  &\text{ for all } v_h\in\mathcal{V}_h.
\end{align*}
If we fix bases $(\psi_j)_{j\in\Jdx}$ of $\mathcal{U}_h$ and
$(\varphi_i)_{i\in\Idx}$ of $\mathcal{V}_h$, the variational equation translates
into a linear system of equations
\begin{equation*}
  G \hat u = \hat f
\end{equation*}
with a matrix $G\in\bbbr^{\Idx\times\Jdx}$ given by
\begin{align}
  g_{ij} &= \int_\Omega \varphi_i(x) \int_\Omega g(x,y) \psi_j(y)
               \,dy\,dx &
  &\text{ for all } i\in\Idx,\ j\in\Jdx.\label{eq:matrix}
\end{align}
The matrix $G$ is typically non-sparse.
For standard applications in the field of elliptic partial differential
equations, we even have $g_{ij}\neq 0$ for \emph{all} $i\in\Idx$, $j\in\Jdx$.

Most techniques proposed to handle matrices of this type fall into one of
two categories:
\emph{kernel-based approximations} replace $g$ by a degenerate
approximation $\tilde g$ that can be treated efficiently,
while \emph{matrix-based approximations} work directly with
the matrix entries.

The popular multipole method \cite{RO85,GRRO87} relies originally
on a special expansion of the kernel function, the panel clustering
method \cite{HANO89} uses the more general Taylor expansion
or interpolation \cite{GI01,BOHA02a}, while ``multipole methods
without multipoles'' frequently rely on ``replacement sources'' located
around the domain selected for approximation \cite{AN92,BIYIZO04}.
Wavelet methods \cite{DAPRSC94b,DASC99,HASC06} implicitly use an
approximation of the kernel function that leads to a sparsification
of the matrix due to the vanishing-moment property of wavelet bases,
therefore we can also consider them as kernel-based approximations.

Matrix-based approximations, on the other hand, typically evaluate
a small number of matrix entries $g_{ij}$ and use these to construct
an approximation.
The cross-approximation approach \cite{TY96,GOTYZA97,TY99} computes
a small number of ``crosses'' consisting each of one row and one column
of submatrices that lead to low-rank approximations.
Combining this technique with a pivoting strategy and an error estimator
leads to the well-known adaptive cross approximation method
\cite{BE00a,BERJ01,BEGR06,MAMITI08,TAHERI11,BEVE12}.

Both kernel- and matrix-based approximations have advantages and
disadvantages.
Kernel-based approximations can typically be rigorously proven to
converge at a certain rate, and they do not depend on the choice
of basis functions or the mesh, but they are frequently less efficient
than matrix-based approximations.
Matrix-based approximations typically lead to very high compression
rates and can be used as black-box methods, but error estimates
currently depend either on computationally unfeasible pivoting
strategies (e.g., computing submatrices of maximal volume) or on
heuristics based on currently unproven stability assumptions.

\emph{Hybrid methods} try to combine kernel- and matrix-based
techniques in order to gain all the advantages and avoid most of the
disadvantages.
An example is the hybrid cross approximation technique
\cite{BOGR04} that applies cross approximation to a small submatrix
resulting from interpolation, thus avoiding the requirement of
possibly unreliable error estimators.
Another example is the kernel-independent multipole method
\cite{BIYIZO04} that uses replacement sources and solves a
regularized linear system to obtain an approximation.

The new algorithm we are presenting in this paper falls into
the hybrid category:
in a first step, an analytical scheme is used to obtain
a kernel approximation that leads to factorized approximation of
suitably-chosen matrix blocks.
In a second step, this approximation is compressed further by
applying a cross approximation method to certain factors appearing
in the first step, allowing us to improve the efficiency
significantly and to obtain an algebraic interpolation operator
that can be used to compute the final matrix approximation very
rapidly.

For the first step, we rely on the relatively recent concept
of \emph{quadrature-based approximations} \cite{BOGO12} that
can be applied to kernel functions resulting from typical
boundary integral formulations and takes advantage of
Green's representation formula in order to reduce the number
of terms.
Compared to standard techniques using Taylor expansion or
polynomial interpolation that require ${\mathcal O}(m^d)$
terms to obtain an $m$-th order approximation in $d$-dimensional
space, the quadrature-based approach requires only ${\mathcal O}(m^{d-1})$
terms and therefore has the same asymptotic complexity as the
original multipole method.
While the original article \cite{BOGO12} relies on the Leibniz
formula to derive an error estimate for the two-dimensional case,
we present a new proof that takes advantage of polynomial
best-approximation properties of the quadrature scheme in order
to obtain a more general result.
We consider the Laplace equation as a model problem, which leads
to the kernel function
\begin{equation*}
  g(x,y) = \frac{1}{4\pi \|x-y\|_2}
\end{equation*}
on a domain or submanifold $\Omega\subseteq\bbbr^3$, but we point
out that our approach carries over to other kernel functions
connected to representation equations, e.g., it is applicable to the
low-frequency Helmholtz equation (cf. \cite[eq. (2.1.5)]{HSWE08}),
the Lam\'e equation (cf. \cite[eq. (2.2.4)]{HSWE08}, the Stokes equation
(cf. \cite[eq. (2.3.8)]{HSWE08}), or the biharmonic equation
(cf. \cite[eq. (2.4.6)]{HSWE08}).

\section[H2-matrices]{${\mathcal H}^2$-matrices}

Since we are not able to approximate the entire matrix at once,
we consider submatrices.
Hierarchical matrix methods \cite{HA99,HAKH00,GRHA02} choose
these submatrices based on a hierarchy of subsets.

%
%
\begin{definition}[Cluster tree]
Let $\Idx$ denote a finite index set.
Let ${\mathcal T}$ be a labeled tree, and denote the label of
a node $t\in{\mathcal T}$ by $\hat t$.
We call ${\mathcal T}$ a \emph{cluster tree} for $\Idx$ if
\begin{itemize}
  \item the root $r\in{\mathcal T}$ has the label $\hat r=\Idx$,
  \item any node $t\in{\mathcal T}$ with $\sons(t)\neq\emptyset$
     satisfies $\hat t = \bigcup_{t'\in\sons(t)} \hat t'$, and
  \item any two different sons $t_1,t_2\in\sons(t)$ of $t\in{\mathcal T}$
     satisfy $\hat t_1\cap\hat t_2 = \emptyset$.
\end{itemize}
The nodes of a cluster tree are called \emph{clusters}.
A cluster tree for an index set $\Idx$ is denoted by $\ctI$, the
corresponding set of leaves by $\lfI := \{ t\in\ctI\ :\ \sons(t)=\emptyset \}$.
\end{definition}

Submatrices of a matrix $G\in\bbbr^{\Idx\times\Jdx}$ are represented
by pairs of clusters chosen from two cluster trees $\ctI$ and
$\ctJ$ for the index sets $\Idx$ and $\Jdx$, respectively.
In order to find suitable submatrices efficienly, these pairs
are also organized in a tree structure.

%
%
\begin{definition}[Block tree]
Let $\ctI$ and $\ctJ$ be cluster trees for index sets $\Idx$
and $\Jdx$ with roots $r_\Idx$ and $r_\Jdx$.
Let ${\mathcal T}$ be a labeled tree, and denote the label of
a node $b\in{\mathcal T}$ by $\hat b$.
We call ${\mathcal T}$ a \emph{block tree} for $\ctI$ and $\ctJ$ if
\begin{itemize}
  \item for each $b\in{\mathcal T}$, there are $t\in\ctI$ and
     $s\in\ctJ$ with $b=(t,s)$ and $\hat b=\hat t\times\hat s$,
  \item the root $r\in{\mathcal T}$ satisfies $r=(r_\Idx,r_\Jdx)$,
  \item for each $b=(t,s)\in{\mathcal T}$ with $\sons(b)\neq\emptyset$,
     we have
     \begin{equation*}
       \sons(b) = \begin{cases}
         \sons(t)\times\{s\}
         &\text{ if } \sons(t)\neq\emptyset
          \text{ and } \sons(s)=\emptyset,\\
         \{t\}\times\sons(s)
         &\text{ if } \sons(t)=\emptyset
          \text{ and } \sons(s)\neq\emptyset,\\
         \sons(t)\times\sons(s)
         &\text{ otherwise}.
       \end{cases}
     \end{equation*}
\end{itemize}
The nodes of a block tree are called \emph{blocks}.
A block tree for cluster trees $\ctI$ and $\ctJ$ is denoted by
$\ctIJ$, the corresponding set of leaves by $\lfIJ$.
\end{definition}

In the following we assume that index sets $\Idx$ and $\Jdx$
with corresponding cluster trees $\ctI$ and $\ctJ$ and a block
tree $\ctIJ$ are given.

It is easy to see that $\ctIJ$ is itself a cluster tree for the
Cartesian product index set $\Idx\times\Jdx$.
A simple induction shows that for any cluster tree, the leaves'
labels form a disjoint partition of the corresponding index
set.
In particular, the leaves of the block tree $\ctIJ$ correspond
to a disjoint partition
\begin{equation*}
  \{ \hat t\times\hat s\ :\ b=(t,s)\in\lfIJ \}
\end{equation*}
of the product index set $\Idx\times\Jdx$ corresponding to the
matrix.
This property allows us to define an approximation of a matrix
$G\in\bbbr^{\Idx\times\Jdx}$ by choosing approximations for all
submatrices $G|_{\hat t\times\hat s}$ corresponding to leaf
blocks $b=(t,s)\in\lfIJ$.

Since we cannot approximate all blocks equally well, we use
an \emph{admissibility condition}
\begin{equation}\label{eq:admissibility}
  \operatorname{adm} : \ctI\times\ctJ \to
         \{ \operatorname{true}, \operatorname{false} \}
\end{equation}
that indicates which blocks can be approximated.
A block $(t,s)$ is called \emph{admissible} if
$\operatorname{adm}(t,s)=\operatorname{true}$ holds.

%
%
\begin{definition}[Admissible block tree]
The block tree $\ctIJ$ is called \emph{admissible} if
\begin{align*}
  \operatorname{adm}(t,s) &\vee
    \sons(t)=\emptyset \vee \sons(s)=\emptyset &
  &\text{ for all leaves } b=(t,s)\in\lfIJ.
\end{align*}
It is called \emph{strictly admissible} if
\begin{align*}
  \operatorname{adm}(t,s) &\vee
    (\sons(t)=\emptyset \wedge \sons(s)=\emptyset) &
  &\text{ for all leaves } b=(t,s)\in\lfIJ.
\end{align*}
\end{definition}

Given cluster trees $\ctI$ and $\ctJ$ and an admissibility
condition, a minimal admissible (or strictly admissible) block
tree can be constructed by starting with the root pair
$(r_\Idx,r_\Jdx)$ and checking whether it is admissible.
If it is, we are done.
Otherwise, we recursively check its sons and further descendants
\cite{HAKH00}.

Admissible leaves $b=(t,s)\in\lfIJ$ correspond to submatrices
$G|_{\hat t\times\hat s}$ that can be approximated, while inadmissible
leaves correspond to submatrices that have to be stored directly.
To distinguish between both cases, we let
\begin{align*}
  \lfaIJ &:= \{ b=(t,s)\in\lfIJ\ :\ \operatorname{adm}(t,s)
                 = \operatorname{true} \}, &
  \lfiIJ &:= \lfIJ \setminus \lfaIJ.
\end{align*}
Defining an approximation of $G$ means defining approximations
for all $G|_{\hat t\times\hat s}$ with $b=(t,s)\in\lfaIJ$.

%
%
\begin{definition}[Hierarchical matrix]
A matrix $G\in\bbbr^{\Idx\times\Jdx}$ is called a
\emph{hierarchical matrix} with local rank $k\in\bbbn$, if for
each $b=(t,s)\in\lfaIJ$ we can find $A_b\in\bbbr^{\hat t\times k}$
and $B_b\in\bbbr^{\hat s\times k}$ such that
\begin{equation*}
  G|_{\hat t\times\hat s} = A_b B_b^*,
\end{equation*}
where $B_b^*\in\bbbr^{k\times\hat s}$ denotes the transposed of the
matrix $B_b$.
\end{definition}

In typical applications, representing all admissible submatrices
by the factors $A_b$ and $B_b$ reduces the storage requirements
for a hierarchical matrix to ${\mathcal O}(n k \log(n))$,
where $n:=\max\{\#\Idx,\#\Jdx\}$ \cite{GRHA02}.

The logarithmic factor can be avoided by refining the
representation:
we choose sets of basis vectors for all clusters $t\in\ctI$
and $s\in\ctJ$ and represent the admissible blocks in terms
of these basis vectors.

%
%
\begin{definition}[Cluster basis]
A family $(V_t)_{t\in\ctI}$ of matrices $V_t\in\bbbr^{\hat t\times k}$
is called a \emph{cluster basis} of rank $k$.
\end{definition}

%
%
\begin{definition}[Uniform hierarchical matrix]
A matrix $G\in\bbbr^{\Idx\times\Jdx}$ is called a
\emph{uniform hierarchical matrix} for cluster bases
$(V_t)_{t\in\ctI}$ and $(W_s)_{s\in\ctJ}$, if for each
$b=(t,s)\in\lfaIJ$ we can find $S_b\in\bbbr^{k\times k}$
such that
\begin{equation*}
  G|_{\hat t\times\hat s} = V_t S_b W_s^*.
\end{equation*}
The matrices $S_b$ are called \emph{coupling matrices}.
\end{definition}

Although a uniform hierarchical matrix requires only $k^2$
units of storage \emph{per block}, leading to total storage
requirements of ${\mathcal O}(n k)$, the cluster bases still
need ${\mathcal O}(n k \log(n))$ units of storage.
In order to obtain \emph{linear} complexity, we assume that
the cluster bases match the hierarchical structure of the
cluster trees, i.e., that the bases of father clusters can
be expressed in terms of the bases of the sons.

%
%
\begin{definition}[Nested cluster basis]
A cluster basis $(V_t)_{t\in\ctI}$ is called \emph{nested},
if for each $t\in\ctI$ and each $t'\in\sons(t)$ there is
a matrix $E_{t'}\in\bbbr^{k\times k}$ such that
\begin{equation*}
  V_t|_{\hat t'\times k} = V_{t'} E_{t'}.
\end{equation*}
The matrices $E_{t'}$ are called \emph{transfer matrices}.
\end{definition}

%
%
\begin{definition}[${\mathcal H}^2$-matrix]
Let $G\in\bbbr^{\Idx\times\Jdx}$ be a uniform hierarchical matrix
for cluster bases $(V_t)_{t\in\ctI}$ and $(W_s)_{s\in\ctJ}$.
If the cluster bases are nested, $G$ is called an
\emph{${\mathcal H}^2$-matrix}.
\end{definition}

In typical applications, representing all admissible submatrices
by the coupling matrices and the cluster bases by the transfer matrices
reduces the storage requirements for an ${\mathcal H}^2$-matrix
to ${\mathcal O}(n k)$.

The remainder of this article is dedicated to the task of finding an
efficient algorithm for constructing ${\mathcal H}^2$-matrix
approximations of matrices corresponding to the Galerkin
discretization of integral operators.

\section{Representation formula and quadrature}

Applying cross approximation directly to matrix blocks would lead
either to a very high computational complexity (if full pivoting
is used) or to a potentially unreliable method (if a heuristic
privoting strategy with a heuristic error estimator is employed).

Since we are interested in constructing a method that is both
fast and reliable, we follow the approach of \emph{hybrid cross
approximation} \cite{BOGR04}:
in a first step, an analytic technique is used to obtain a degenerate
approximation of the kernel function.
In a second step, an algebraic technique is used to reduce the
storage requirements of the approximation obtained in the first step,
in our case by a reliable cross approximation constructed with
full pivoting.
Applying a modification similar to \cite{BEVE12}, this approach
leads to an efficient $\mathcal{H}^2$-matrix approximation.

We follow the approach described in \cite{BOGO12} for the Laplace
equation, since it offers optimal-order ranks and is very robust:
let $d\in\{2,3\}$, let $\omega\subseteq\bbbr^d$ be a Lipschitz domain,
and let $u:\overline{\omega}\to\bbbr$ be harmonic in $\omega$.
Green's representation formula (cf., e.g., \cite[Theorem~2.2.2]{HA92})
states
\begin{align*}
  u(x) &= \int_{\partial\omega} g(x,z)
                             \frac{\partial u}{\partial n}(z) \,dz
        - \int_{\partial\omega} \frac{\partial g}{\partial n(z)}(x,z)
                            u(z) \,dz &
  &\text{ for all } x\in\omega,
\end{align*}
where
\begin{equation*}
  g(x,y) = \begin{cases}
    -\frac{1}{2\pi} \log \|x-y\|_2 &\text{ if } d=2,\\
    \frac{1}{4\pi} \frac{1}{\|x-y\|_2} &\text{ if } d=3
  \end{cases}
\end{equation*}
denotes a fundamental solution of the negative Laplace operator $-\Delta$.

For any $y\not\in\bar\omega$, the function $u(x) = g(x,y)$ is harmonic,
so we can apply the formula to obtain
\begin{align}
  g(x,y) &= \int_{\partial\omega} g(x,z)
               \frac{\partial g}{\partial n(z)}(z,y) \,dz
          - \int_{\partial\omega} \frac{\partial g}{\partial n(z)}(x,z)
               g(z,y) \,dz\label{eq:green_g}\\
  &\qquad\text{ for all } x\in\omega,\ y\not\in\bar\omega.\notag
\end{align}
On the right-hand side, the variables $x$ and $y$ no longer appear
together as arguments of $g$ or $\partial g/\partial n$, the integrands
are tensor products.

If $x$ and $y$ are sufficiently far from the boundary $\partial\omega$,
the integrands are smooth, so we can approximate the integrals by
an exponentially convergent quadrature rule.
Denoting its weights by $(w_\nu)_{\nu\in K}$ and its quadrature points
by $(z_\nu)_{\nu\in K}$, we find the approximation
\begin{align}\label{eq:green_g_apx1}
  g(x,y) &\approx \sum_{\nu\in K} w_\nu g(x,z_\nu)
                \frac{\partial g}{\partial n(z_\nu)}(z_\nu,y)
              - w_\nu \frac{\partial g}{\partial n(z_\nu)}(x,z_\nu)
                g(z_\nu,y)\\
  &\qquad \text{ for all } x\in\omega,\ y\not\in\bar\omega.\notag
\end{align}
Since this is a degenerate approximation of the kernel function,
discretizing the corresponding integral operator directly leads
to a hierarchical matrix.

In order to ensure uniform exponential convergence of the approximation,
we have to choose a suitable admissibility condition that ensures
that $x$ and $y$ are sufficiently far from the boundary $\partial\omega$.

A simple approach relies on bounding boxes:
given a cluster $t\in\ctI$, we assume that there is an axis-parallel
box
\begin{equation*}
  {\mathcal B}_t = [a_{t,1},b_{t,1}]\times\ldots\times[a_{t,d},b_{t,d}]
\end{equation*}
containing the supports of all basis functions corresponding to
indices in $\hat t$, i.e., such that
\begin{align*}
  \supp \varphi_i &\subseteq {\mathcal B}_t &
  &\text{ for all } i\in\hat t.
\end{align*}
These bounding boxes can be constructed efficiently by
a recursive algorithm \cite{BOGRHA03a}.

In order to be able to apply the quadrature approximation to a
cluster $t\in\ctI$, we have to ensure that $x$ and $y$ are at
a ``safe distance'' from $\partial\omega$.
In view of the error estimates presented in \cite{BOGO12}, we
denote the \emph{farfield} of $t$ by
\begin{equation}\label{eq:farfield}
  {\mathcal F}_t
  := \{ y\in\bbbr^d\ :\ \diam_\infty({\mathcal B}_t)\leq
                          \dist_\infty({\mathcal B}_t,y) \},
\end{equation}
where diameter and distance with respect to the maximum norm are given by
\begin{align*}
  \diam_\infty({\mathcal B}_t)
  &:= \max\{ \|x-y\|_\infty\ :\ x,y\in {\mathcal B}_t \},\\
  \dist_\infty({\mathcal B}_t, y)
  &:= \min\{ \|x-y\|_\infty\ :\ x\in {\mathcal B}_t \}.
\end{align*}
We are looking for an approximation that yields a sufficiently small error
for all $x\in{\mathcal B}_t$ and all $y\in{\mathcal F}_t$.
We apply Green's representation formula to the domain $\omega_t$
given by
\begin{align*}
  \delta_t &:= \diam_\infty({\mathcal B}_t)/2, &
  \omega_t &:= [a_{t,1}-\delta_t, b_{t,1}+\delta_t]\times\ldots
               \times [a_{t,d}-\delta_t, b_{t,d}+\delta_t].
\end{align*}
It is convenient to represent $\omega_t$ by means of a reference
cube $[-1,1]^d$ using the affine mapping
\begin{align}\label{eq:Phi_t}
  \Phi_t : [-1,1]^d &\to \omega_t, &
           \hat x &\mapsto \frac{b+a}{2} + \frac{1}{2}
              \begin{pmatrix}
                b_{t,1}-a_{t,1}+2 \delta_t & & \\
                & \ddots & \\
                & & b_{t,d}-a_{t,d}+2 \delta_t
              \end{pmatrix} \hat x,
\end{align}
and this directly leads to affine parametrizations
\begin{subequations}\label{eq:parametrization}
\begin{align}
  \gamma_{2\iota-1}(\hat z)
  &:= \Phi_t(\hat z_1,\ldots,\hat z_{\iota-1},-1,\hat z_\iota,
              \ldots,\hat z_{d-1}),\\
  \gamma_{2\iota}(\hat z)
  &:= \Phi_t(\hat z_1,\ldots,\hat z_{\iota-1},+1,\hat z_\iota,
              \ldots,\hat z_{d-1})\\
  &\qquad\text{for all } \iota\in\{1,\ldots,d\},\ \hat z\in Q,
            \notag
\end{align}
\end{subequations}
of the boundary $\partial\omega_t$, where
\begin{equation*}
  Q := [-1,1]^{d-1}
\end{equation*}
is the parameter domain for one side of the boundary, such that
\begin{align*}
  \partial\omega_t &= \bigcup_{\iota=1}^{2d} \gamma_\iota(Q), &
  \int_{\partial\omega_t} f(z) \,dz &= \sum_{\iota=1}^{2d}
       \int_Q \sqrt{\det D\gamma_\iota^* D\gamma_\iota}
              f(\gamma_\iota(\hat z)) \,d\hat z.
\end{align*}
We approximate the integrals on the right-hand side by a tensor
quadrature formula:
let $m\in\bbbn$, let $\xi_1,\ldots,\xi_m\in[-1,1]$ denote the
points and $w_1,\ldots,w_m\in\bbbr$ the weights of the one-dimensional
$m$-point Gauss quadrature formula for the reference interval $[-1,1]$.
If we define
\begin{align*}
  \hat z_\mu &:= (\xi_{\mu_1},\ldots,\xi_{\mu_{d-1}}), &
  \hat w_\mu &:= w_{\mu_1} \cdots w_{\mu_{d-1}} &
  &\text{ for all } \mu\in M:=\{1,\ldots,m\}^{d-1},
\end{align*}
we obtain the tensor quadrature formula
\begin{equation*}
  \int_Q \hat f(\hat z) \,d\hat z
  \approx \sum_{\mu\in M} \hat w_\mu \hat f(\hat z_\mu).
\end{equation*}
Applying this result to all surfaces of $\partial\omega_t$ yields
\begin{equation*}
  \int_{\partial\omega} f(z) \,dz
  \approx \sum_{\iota=1}^{2d} \sum_{\mu\in M}
            \hat w_\mu \sqrt{\det D\gamma_\iota^* D\gamma_\iota}
            f(\gamma_\iota(\hat z_\mu))
  = \sum_{(\iota,\mu)\in K} w_{\iota\mu}
            f(z_{\iota\mu}),
\end{equation*}
where we define
\begin{align*}
  K &:= \{1,\ldots,2d\}\times M, &
  w_{\iota\mu} &:= \hat w_\mu \sqrt{\det D\gamma_\iota^* D\gamma_\iota}, &
  z_{\iota\mu} &:= \gamma_\iota(\hat z_\mu).
\end{align*}
Using this quadrature formula in (\ref{eq:green_g_apx1}) yields
\begin{align}\label{eq:green_g_apx}
  \tilde g_t(x,y)
  &:= \sum_{(\iota,\mu)\in K}
        w_{\iota\mu}
        g(x, z_{\iota\mu})
        \frac{\partial g}{\partial n_\iota}(z_{\iota\mu}, y)
        - w_{\iota\mu}
        \frac{\partial g}{\partial n_\iota}(x, z_{\iota\mu})
        g(z_{\iota\mu}, y)\\
  &= \sum_{(\iota,\mu)\in K}
        w_{\iota\mu}
        g(x, z_{\iota\mu})
        \frac{\partial g}{\partial n_\iota}(z_{\iota\mu}, y)
        - w_{\iota\mu} \delta_t
        \frac{\partial g}{\partial n_\iota}(x, z_{\iota\mu})
        \frac{1}{\delta_t} g(z_{\iota\mu}, y)\notag\\
  &\qquad\text{ for all } x\in{\mathcal B}_t,\ y\in{\mathcal F}_t,\notag
\end{align}
where $n_\iota$ denotes the outer normal vector of the face $\gamma_\iota(Q)$
of $\omega_t$.
The additional scaling factors in the second row have been added to
make the estimate of Lemma~\ref{le:hybrid_scaling} more elegant by
compensating for the different singularity orders of the integrands.

We use the admissibility condition (cf. \ref{eq:admissibility}) given by
the relative distance of clusters:
a block $b=(t,s)$ is admissible if $\mathcal{B}_s$ is in the farfield
of $t$, i.e., if ${\mathcal B}_s\subseteq{\mathcal F}_t$ holds.
Given such an admissible block $b=(t,s)$, replacing the kernel function $g$
by $\tilde g_t$ in (\ref{eq:matrix}) leads to the low-rank approximation
\begin{align}\label{eq:green_ab}
  G|_{\hat t\times\hat s}
  &\approx A_t B_{ts}^*, &
  A_t &= \begin{pmatrix} A_{t+} & A_{t-} \end{pmatrix}, &
  B_{ts} &= \begin{pmatrix} B_{ts+} & B_{ts-} \end{pmatrix},
\end{align}
where the low-rank factors $A_{t+},A_{t-}\in\bbbr^{\hat t\times K}$ and
$B_{ts+},B_{ts-}\in\bbbr^{\hat s\times K}$ are given by
\begin{subequations}\label{eq:ab_def}
\begin{gather}
  a_{t+,i\nu} := \sqrt{w_\nu}
               \int_\Omega \varphi_i(x) g(x,z_\nu)\,dx,\quad
  b_{ts+,j\nu} := \sqrt{w_\nu} \int_\Omega \psi_j(y)
                      \frac{\partial g}{\partial n_\iota}(z_\nu,y) \,dy,\\
  a_{t-,i\nu} := \delta_t \sqrt{w_\nu}
               \int_\Omega \varphi_i(x)
                     \frac{\partial g}{\partial n_\iota}(x,z_\nu)\,dx,\quad
  b_{ts-,j\nu} := -\frac{\sqrt{w_\nu}}{\delta_t}
               \int_\Omega \psi_j(y)
                      g(z_\nu,y) \,dy\\
  \qquad\qquad\text{ for all } \nu=(\iota,\mu)\in K,
               \ i\in\hat t,\ j\in\hat s.\notag
\end{gather}
\end{subequations}
It is important to note that $A_t$ depends only on $t$, but not on $s$.
This property allows us to extend our construction to obtain
${\mathcal H}^2$-matrices in later sections.

%
%
\begin{remark}[Complexity]
We have $\#K = 2dm^{d-1}$ by definition, therefore $A_t B_{ts}^*$ is
an approximation of rank $4dm^{d-1}$.
Standard complexity estimates for hierarchical matrices
(cf. \cite[Lemma 2.4]{GRHA02}) allow us to conclude that the
resulting approximation requires ${\mathcal O}(n m^{d-1} \log n)$
units of storage.
If we assume that the entries of $A_t$ and $B_{ts}$ and the nearfield
matrices are computed by constant-order quadrature, the hierarchical
matrix representation can be constructed in ${\mathcal O}(n m^{d-1} \log n)$
operations.
\end{remark}

\section{Convergence of the quadrature approximation}

The error analysis in \cite{BOGO12} relies on Leibniz' formula to
obtain estimates of the derivatives of the integrand in (\ref{eq:green_g}).
Here we present an alternative proof that handles the integrand's product
directly.

The fundamental idea is the following:
since the one-dimensional formula yields the exact integral for
polynomials of degree $2m-1$, the tensor formula yields the exact
integral for tensor products of polynomials of this degree, i.e., we have
\begin{align}\label{eq:gauss_exact}
  \int_Q \hat p(\hat z) \,d\hat z
  &= \sum_{\nu\in M} \hat w_\nu \hat p(\hat z_\nu) &
  &\text{ for all } \hat p\in {\mathcal Q}_{2m-1},
\end{align}
where ${\mathcal Q}_{2m-1}$ denotes the space of $(d-1)$-dimensional
tensor products of polynomials of degree $2m-1$.

Applying (\ref{eq:gauss_exact}) to the constant polynomial $\hat p=1$
and taking advantage of the fact that Gauss weights are non-negative,
we obtain
\begin{equation}\label{eq:gauss_stable}
  \sum_{\nu\in M} |w_\nu| = \sum_{\nu\in M} w_\nu = 2^{d-1}.
\end{equation}
Combining (\ref{eq:gauss_exact}) and (\ref{eq:gauss_stable}) leads
to the following well-known best-approximation estimate.

%
%
\begin{lemma}[Quadrature error]
\label{le:quadrature_single}
Let $\hat f\in C(Q)$.
We have
\begin{align*}
  \left| \int_Q \hat f(\hat z) \,d\hat z
         - \sum_{\nu\in M} \hat w_\nu \hat f(\hat z_\nu) \right|
  &\leq 2^d \|\hat f - \hat p\|_{\infty,Q} &
  &\text{ for all } \hat p\in {\mathcal Q}_{2m-1}.
\end{align*}
\end{lemma}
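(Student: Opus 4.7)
The plan is to exploit the exactness of the Gauss rule on $\mathcal{Q}_{2m-1}$ (\ref{eq:gauss_exact}) by inserting an arbitrary polynomial $\hat p\in\mathcal{Q}_{2m-1}$ as a ``pivot.'' Concretely, for any such $\hat p$ one has
\begin{equation*}
  \int_Q \hat p(\hat z)\,d\hat z = \sum_{\nu\in M} \hat w_\nu \hat p(\hat z_\nu),
\end{equation*}
so the quadrature error for $\hat f$ coincides with the quadrature error for $\hat f-\hat p$:
\begin{equation*}
  \int_Q \hat f(\hat z)\,d\hat z - \sum_{\nu\in M} \hat w_\nu \hat f(\hat z_\nu)
  = \int_Q (\hat f-\hat p)(\hat z)\,d\hat z
    - \sum_{\nu\in M} \hat w_\nu (\hat f-\hat p)(\hat z_\nu).
\end{equation*}

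Next I would bound the two pieces separately by the uniform norm $\|\hat f-\hat p\|_{\infty,Q}$. The integral contributes at most $|Q|\,\|\hat f-\hat p\|_{\infty,Q} = 2^{d-1}\|\hat f-\hat p\|_{\infty,Q}$ because $Q=[-1,1]^{d-1}$ has volume $2^{d-1}$. The quadrature sum is bounded by $\bigl(\sum_{\nu\in M}|\hat w_\nu|\bigr)\,\|\hat f-\hat p\|_{\infty,Q}$, and here I would invoke (\ref{eq:gauss_stable}) which, using positivity of Gauss weights and exactness on the constant polynomial, gives $\sum_{\nu\in M}|\hat w_\nu|=2^{d-1}$. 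Adding the two bounds yields the factor $2\cdot 2^{d-1}=2^d$ claimed in the lemma.

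There is essentially no obstacle: the argument is the standard ``stability plus consistency implies best-approximation'' reasoning for quadrature, and all the ingredients have already been laid out just above the statement. The only minor care needed is to note that the estimate holds for an \emph{arbitrary} $\hat p\in\mathcal{Q}_{2m-1}$, which is exactly how the lemma is phrased; no infimum needs to be taken at this stage, since the choice of $\hat p$ is left to the user (it will later be specialized via best-approximation of the Green integrand to obtain exponential convergence).
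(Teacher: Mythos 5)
Your proposal is exactly the paper's own proof: subtract an arbitrary $\hat p\in\mathcal{Q}_{2m-1}$ using exactness (\ref{eq:gauss_exact}), then bound the integral term by $|Q|=2^{d-1}$ and the quadrature sum by $\sum_{\nu}|\hat w_\nu|=2^{d-1}$ via (\ref{eq:gauss_stable}), giving the total factor $2^d$. The argument is correct and complete.
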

\begin{proof}
Let $\hat p\in{\mathcal Q}_{2m-1}$.
Due to (\ref{eq:gauss_exact}) and (\ref{eq:gauss_stable}), we have
\begin{align*}
  \left| \int_Q \hat f(\hat z) \,d\hat z
         - \sum_{\nu\in M} \hat w_\nu \hat f(\hat z_\nu) \right|
  &= \left| \int_Q (\hat f - \hat p)(\hat z) \,d\hat z
         - \sum_{\nu\in M} \hat w_\nu (\hat f - \hat p)(\hat z_\nu) \right|\\
  &\leq \left| \int_Q (\hat f - \hat p)(\hat z) \,d\hat z \right|
         + \left| \sum_{\nu\in M} \hat w_\nu
                      (\hat f - \hat p)(\hat z_\nu) \right|\\
  &\leq \int_Q \|\hat f - \hat p\|_{\infty,Q} \,d\hat z
        + \sum_{\nu\in M} |\hat w_\nu|\, \|\hat f - \hat p\|_{\infty,Q}\\
  &= 2^{d-1} \|\hat f - \hat p\|_{\infty,Q}
        + 2^{d-1} \|\hat f - \hat p\|_{\infty,Q}\\
  &= 2^d \|\hat f - \hat p\|_{\infty,Q}.
\end{align*}
\qed
\end{proof}

We are interested in approximating the integrals appearing in
(\ref{eq:green_g}), where the integrands are products.
Fortunately, Lemma~\ref{le:quadrature_single} can be easily
extended to products.

%
%
\begin{lemma}[Products]
\label{le:quadrature_product}
Let $\hat f,\hat g\in C(Q)$.
We have
\begin{align*}
  \left| \int_Q \hat f(\hat z) \hat g(\hat z) \,d\hat z \right.
       &\left.  - \sum_{\nu\in M} \hat w_\nu \hat f(\hat z_\nu)
            \hat g(\hat z_\nu) \right|\\
  &\leq 2^d ( \|\hat f - \hat p\|_{\infty,Q} \|\hat g\|_{\infty,Q}
              + \|\hat f\|_{\infty,Q} \|\hat g - \hat q\|_{\infty,Q}\\
  &\qquad  + \|\hat f - \hat p\|_{\infty,Q}
               \|\hat g - \hat q\|_{\infty,Q})
   \qquad\text{ for all } \hat p\in {\mathcal Q}_m, \hat q\in {\mathcal Q}_{m-1}.
\end{align*}
\end{lemma}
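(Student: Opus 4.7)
The plan is to reduce this to the single-function bound in Lemma~\ref{le:quadrature_single} by treating $\hat f \hat g$ as one continuous function on $Q$ and approximating it by the polynomial product $\hat p \hat q$. The crucial observation is that if $\hat p \in {\mathcal Q}_m$ and $\hat q \in {\mathcal Q}_{m-1}$, then $\hat p\hat q \in {\mathcal Q}_{2m-1}$, so it lies in the exactness class of the tensor Gauss formula. This is why the degrees $m$ and $m-1$ appear in the statement rather than $2m-1$ for both: one polynomial factor is split into two cheaper factors whose degrees together reach the maximum allowed.

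The first step is to apply Lemma~\ref{le:quadrature_single} to the continuous function $\hat f \hat g$ with $\hat p \hat q$ as the comparison polynomial, which immediately yields
\begin{equation*}
  \left| \int_Q \hat f(\hat z) \hat g(\hat z) \,d\hat z
         - \sum_{\nu\in M} \hat w_\nu \hat f(\hat z_\nu) \hat g(\hat z_\nu) \right|
  \leq 2^d \|\hat f \hat g - \hat p \hat q\|_{\infty,Q}.
\end{equation*}

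The second step is the algebraic identity
\begin{equation*}
  \hat f \hat g - \hat p \hat q
  = (\hat f - \hat p)\hat g + \hat f (\hat g - \hat q) - (\hat f - \hat p)(\hat g - \hat q),
\end{equation*}
which one verifies by expanding the right-hand side. This is the only mildly tricky point: one has to choose an expansion of $\hat f\hat g - \hat p\hat q$ whose terms match the three products appearing in the claim. Taking the supremum norm on $Q$ and using the triangle inequality together with the submultiplicativity $\|\hat u\hat v\|_{\infty,Q} \leq \|\hat u\|_{\infty,Q} \|\hat v\|_{\infty,Q}$ gives
\begin{equation*}
  \|\hat f \hat g - \hat p \hat q\|_{\infty,Q}
  \leq \|\hat f - \hat p\|_{\infty,Q} \|\hat g\|_{\infty,Q}
     + \|\hat f\|_{\infty,Q} \|\hat g - \hat q\|_{\infty,Q}
     + \|\hat f - \hat p\|_{\infty,Q} \|\hat g - \hat q\|_{\infty,Q}.
\end{equation*}
Combining the two displays yields the asserted bound. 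No real obstacles arise beyond picking the right decomposition; the rest is a direct triangle-inequality computation.
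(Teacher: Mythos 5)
Your proof is correct and follows exactly the same route as the paper: apply Lemma~\ref{le:quadrature_single} to $\hat f\hat g$ with the comparison polynomial $\hat p\hat q\in{\mathcal Q}_{2m-1}$, then use the identity $\hat f\hat g-\hat p\hat q=(\hat f-\hat p)\hat g+\hat f(\hat g-\hat q)-(\hat f-\hat p)(\hat g-\hat q)$ together with the triangle inequality and submultiplicativity of the sup norm. The only cosmetic difference is that the paper arrives at this decomposition in two small steps (first writing $\hat p(\hat g-\hat q)$, then substituting $\hat p=\hat f-(\hat f-\hat p)$), whereas you state the final identity directly.
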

\begin{proof}
Let $\hat p\in{\mathcal Q}_m$ and $\hat q\in{\mathcal Q}_{m-1}$.
Then we have $\hat p\hat q\in{\mathcal Q}_{2m-1}$ and
Lemma~\ref{le:quadrature_single} yields
\begin{equation*}
  \left| \int_Q \hat f(\hat z) \hat g(\hat z) \,d\hat z
         - \sum_{\nu\in M} \hat w_\nu \hat f(\hat z_\nu) \hat g(\hat z_\nu)
  \right| \leq 2^d \|\hat f\hat g - \hat p\hat q\|_{\infty,Q}.
\end{equation*}
Observing
\begin{align*}
  \|\hat f \hat g - \hat p \hat q\|_{\infty,Q}
  &= \| (\hat f - \hat p) \hat g + \hat p (\hat g - \hat q) \|_{\infty,Q}\\
  &= \| (\hat f - \hat p) \hat g
        + \hat f (\hat g - \hat q)
        - (\hat f - \hat p) (\hat g - \hat q) \|_{\infty,Q}\\
  &\leq \|\hat f - \hat p\|_{\infty,Q} \|\hat g\|_{\infty,Q}
        + \|\hat f\|_{\infty,Q} \|\hat g - \hat q\|_{\infty,Q}\\
  &\qquad + \|\hat f - \hat p\|_{\infty,Q} \|\hat g - \hat q\|_{\infty,Q}
\end{align*}
completes the proof.
\qed
\end{proof}

In our application, we want to use quadrature to approximate the integrals
\begin{align*}
  \int_{\partial\omega_t}
    & g(x,z) \frac{\partial g}{\partial n(z)}(z,y)
    - \frac{\partial g}{\partial n(z)}(x,z) g(z,y) \,dz\\
  &= \sum_{\iota=1}^{2d} \sqrt{\det D\gamma_\iota^* D\gamma_\iota} \int_Q
    g(x,\gamma_\iota(\hat z))
    \frac{\partial g}{\partial n_\iota}(\gamma_\iota(\hat z),y)
    - \frac{\partial g}{\partial n_\iota}(x,\gamma_\iota(\hat z))
    g(\gamma_\iota(\hat z),y) \,d\hat z\\
  &= \sum_{\iota=1}^{2d} \sqrt{\det D\gamma_\iota^* D\gamma_\iota} \int_Q
    \hat f_1(\hat z) \hat g_1(\hat z)
    - \hat g_2(\hat z) \hat f_2(\hat z) \,d\hat z,
\end{align*}
where $\hat f_1$, $\hat f_2$, $\hat g_1$ and $\hat g_2$ are given by
\begin{subequations}
\begin{align}\label{eq:fg_first}
  \hat f_1(\hat z)
  &:= g(x,\gamma_\iota(\hat z)), &
  \hat g_1(\hat z)
  &:= \frac{\partial g}{\partial n_\iota}(\gamma_\iota(\hat z),y),\\
  \hat f_2(\hat z)
  &:= g(\gamma_\iota(\hat z),y), &
  \hat g_2(\hat z)
  &:= \frac{\partial g}{\partial n_\iota}(x,\gamma_\iota(\hat z)).
\end{align}
\end{subequations}
Therefore we are looking for polynomial approximations of these
functions in order to apply Lemma~\ref{le:quadrature_product}.
We will look for $\hat p_1,\hat p_2\in{\mathcal Q}_m$ approximating
$\hat f_1,\hat f_2$ and for $\hat q_1,\hat q_2\in{\mathcal Q}_{m-1}$
approximating $\hat g_1,\hat g_2$.
This task can be solved easily using the framework developed in
\cite[Chapter 4]{BO10}, in particular the following result:

%
%
\begin{theorem}[Chebyshev interpolation]
\label{th:chebyshev}
Let ${\mathfrak I}_m^Q:C(Q)\to{\mathcal Q}_m$ denote the $m$-th order
tensor Chebyshev interpolation operator.
Let $f\in C^\infty(Q)$ with $C_f\in\bbbr_{\geq 0}$ and $\gamma_f\in\bbbr_{>0}$
such that
\begin{align}\label{eq:analyticity}
  \left\| \frac{\partial^n}{\partial z_\iota^n} f \right\|_{\infty,Q}
  &\leq \frac{C_f}{\gamma_f^n} n! &
  &\text{ for all } \iota\in\{1,\ldots,d\},\ n\in\bbbn_0
\end{align}
holds.
Then we have
\begin{equation*}
  \|f - {\mathfrak I}_m^Q[f]\|_{\infty,Q}
  \leq 2 d e C_f (\Lambda_m+1)^d
       \left( 1 + \frac{\diam_\infty(Q)}{\gamma_f} \right)
       (m + 1)
       \varrho\left(\frac{2 \gamma_f}{b_{t,\iota}-a_{t,\iota}} \right)^{-m},
\end{equation*}
where $\Lambda_m\leq m+1$ denotes the stability constant of
one-dimensional Chebyshev interpolation and
\begin{equation*}
  \varrho(r) := r + \sqrt{1+r^2} > r+1.
\end{equation*}
\end{theorem}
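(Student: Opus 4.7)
The plan is to reduce the multi-dimensional estimate to the one-dimensional Chebyshev interpolation error by a telescoping argument, and then to establish the one-dimensional error bound from the derivative estimate \eqref{eq:analyticity} via classical Bernstein-type results for functions analytic on a Bernstein ellipse.

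First I would write the tensor interpolation operator as a composition $\mathfrak{I}_m^Q = \mathfrak{I}_m^{(1)} \circ \cdots \circ \mathfrak{I}_m^{(d)}$ of one-dimensional interpolants acting in each coordinate direction and exploit the telescoping identity
\begin{equation*}
  f - \mathfrak{I}_m^Q[f]
  = \sum_{\iota=1}^{d}
      \Bigl( \prod_{\mu<\iota} \mathfrak{I}_m^{(\mu)} \Bigr)
      (I - \mathfrak{I}_m^{(\iota)}) f,
\end{equation*}
treating the inner variables as parameters. The one-dimensional Chebyshev interpolation operator has operator norm $\Lambda_m$, so each summand is bounded by $\Lambda_m^{\iota-1}$ times a purely one-dimensional interpolation error in the $\iota$-th coordinate. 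Summing produces a geometric factor of the form $(\Lambda_m+1)^{d-1}$ (and, together with the $d$ summands, the overall $d(\Lambda_m+1)^d$ prefactor announced in the statement).

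Second, for each coordinate $\iota$ and any fixed choice of the remaining variables, the univariate section of $f$ still satisfies \eqref{eq:analyticity}. From this derivative bound one concludes (via the Taylor series, or by directly invoking the corresponding lemma of \cite[Chapter~4]{BO10}) that the section extends holomorphically to a Bernstein ellipse $E_\varrho$ with parameter $\varrho = \varrho\bigl(2\gamma_f/(b_{t,\iota}-a_{t,\iota})\bigr)$, with $\|f\|_{\infty,E_\varrho}$ controlled by $C_f\bigl(1+\diam_\infty(Q)/\gamma_f\bigr)$. The classical Bernstein estimate for Chebyshev interpolation then yields a one-dimensional error bound of the shape
\begin{equation*}
  \|g - \mathfrak{I}_m^{(\iota)} g\|_{\infty}
  \leq c\, \|g\|_{\infty,E_\varrho}\,
         \varrho\!\left(\tfrac{2\gamma_f}{b_{t,\iota}-a_{t,\iota}}\right)^{-m},
\end{equation*}
with $c$ absorbing the $2e(m+1)$ prefactor. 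Multiplying this out and combining with the telescoping estimate gives precisely the claimed inequality.

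The main obstacle is the bookkeeping of the constants: tracking how the $2e$, the $(m+1)$, the $(1+\diam_\infty(Q)/\gamma_f)$ growth on the Bernstein ellipse, and the accumulated $(\Lambda_m+1)^d$ from the telescoping combine into the exact form of the bound. The analytic content is entirely standard and reduces to citing the one-dimensional Bernstein--Chebyshev theory from \cite[Chapter~4]{BO10}; the real work lies in propagating the derivative hypothesis \eqref{eq:analyticity} cleanly through the successive univariate interpolations without spurious factors.
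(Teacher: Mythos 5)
The paper does not actually prove this theorem: the ``proof'' consists solely of the citation \cite[Theorem~4.20]{BO10} (in the isotropic case with $\sigma=1$). What you have written is therefore not an alternative to the paper's argument so much as a reconstruction of the argument that the cited reference uses. That reconstruction is correct in structure. The telescoping identity
$f - \mathfrak{I}_m^Q f = \sum_{\iota=1}^d \bigl(\prod_{\mu<\iota}\mathfrak{I}_m^{(\mu)}\bigr)(I-\mathfrak{I}_m^{(\iota)})f$
is exactly the tensor-product reduction underlying \cite[Chapter~4]{BO10}, and bounding each summand by $\Lambda_m^{\iota-1}\,(1+\Lambda_m)\,\inf_{p}\|f-p\|$ (stability of the inner interpolations times the Lebesgue-constant bound for the $\iota$-th direction) sums to the $d\,(\Lambda_m+1)^d$ prefactor after the crude estimate $\Lambda_m^{\iota-1}(1+\Lambda_m)\le(\Lambda_m+1)^\iota$. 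Passing from the derivative bound \eqref{eq:analyticity} to holomorphic extension onto a Bernstein ellipse and then to exponential decay of the Chebyshev best-approximation error is the standard path and is what produces the $\varrho(\cdot)^{-m}$ factor; the $2e(m+1)\bigl(1+\diam_\infty(Q)/\gamma_f\bigr)$ term is exactly the constant that the explicit one-dimensional lemma in \cite{BO10} carries when the ellipse parameter is chosen from $\gamma_f$ rather than optimized. So the plan is sound, and the main caveat you already flag yourself, namely that the remaining work is constant bookkeeping, is accurate. One small point worth noting: the $b_{t,\iota}-a_{t,\iota}$ appearing in the exponent of the statement is an artefact of the citation being taken from a more general anisotropic setting; here $Q=[-1,1]^{d-1}$ with all side lengths $2$, so $\varrho\bigl(2\gamma_f/(b_{t,\iota}-a_{t,\iota})\bigr)$ collapses to $\varrho(\gamma_f)$, which is indeed how the theorem is applied in the proof of Theorem~\ref{th:quadrature}. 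Your sketch handles this correctly by keeping the side length explicit.
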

\begin{proof}
cf. \cite[Theorem~4.20]{BO10} in the isotropic case with $\sigma=1$.
\qed
\end{proof}

If we can satisfy the analyticity condition (\ref{eq:analyticity}),
this theorem provides us with a polynomial in ${\mathcal Q}_m$.
Since the functions $\hat f_1$, $\hat f_2$, $\hat g_1$ and $\hat g_2$
directly depend on the kernel function $g$, we cannot proceed without
taking the latter's properties into account.

In particular, we assume that $g$ is \emph{asymptotically smooth}
(cf. \cite{BRLU90,BR91,HA09}), i.e., that there are constants
$C_{\rm as},c_0\in\bbbr_{\geq 0}$, $\sigma\in\bbbn_0$ such that
\begin{align}\label{eq:asymptotically_smooth}
  | \partial^\nu_x \partial^\mu_y g(x,y)| 
  &\leq C_{\rm as} (\nu+\mu)!
        \frac{c_0^{|\nu|+|\mu|}}{\|x-y\|^{\sigma+|\nu|+|\mu|}} &
  &\begin{aligned}[t]
      &\text{for all } \nu,\mu\in\bbbn^d,\\
      &\quad x,y\in\bbbr^d\text{ with } x\neq y.
    \end{aligned}
\end{align}
The asymptotic smoothness of the fundamental solutions of the
Laplace operator $\Delta$ and other important kernel functions is
well-established, cf., e.g., \cite[Satz~E.1.4]{HA09}.

We have to be able to bound the right-hand side of the estimate
(\ref{eq:asymptotically_smooth}).
Since we have chosen the domain $\omega_t$ appropriately, we can
easily find the required estimate.

%
%
\begin{lemma}[Domain and parametrization]
\label{le:omega_t}
The domain $\omega_t$ satisfies the following estimates:
\begin{align}\label{eq:diam_dist}
  \diam_\infty(\omega_t) &= 4 \delta_t, &
  \dist_\infty({\mathcal B}_t, \partial\omega_t) &= \delta_t, &
  \dist_\infty(\partial\omega_t, {\mathcal F}_t) &\geq \delta_t, &
  |\partial\omega_t| &\leq 2 d (4 \delta_t)^{d-1}.
\end{align}
The parametrizations are bijective and satisfy
\begin{align}\label{eq:derivative}
  \|D \Phi_t\|_2 &= 2\delta_t, &
  \|D \gamma_\iota\|_2 &\leq 2 \delta_t.
\end{align}
\end{lemma}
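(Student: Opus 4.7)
The plan is to unpack all four assertions directly from the definitions of $\mathcal{B}_t$, $\delta_t$, $\omega_t$, $\mathcal{F}_t$, $\Phi_t$, and $\gamma_\iota$. The key observation that ties everything together is that, by definition of $\delta_t$, each side length of $\mathcal{B}_t$ satisfies $b_{t,\iota}-a_{t,\iota}\leq 2\delta_t$, with equality for at least one index. Consequently the corresponding side of $\omega_t$ has length $b_{t,\iota}-a_{t,\iota}+2\delta_t$, which lies in $[2\delta_t,4\delta_t]$, and the maximum is $4\delta_t$. This immediately yields $\diam_\infty(\omega_t)=4\delta_t$ and $|\partial\omega_t|\leq 2d(4\delta_t)^{d-1}$, since $\partial\omega_t$ consists of $2d$ axis-parallel faces whose $(d{-}1)$-dimensional Lebesgue measure is a product of $d-1$ side lengths, each at most $4\delta_t$.

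For the distance statements, I would note that $\omega_t$ is precisely the closed $\delta_t$-neighborhood of $\mathcal{B}_t$ in the maximum norm, so $\dist_\infty(\mathcal{B}_t,\partial\omega_t)=\delta_t$ is immediate from the construction (the nearest point of $\partial\omega_t$ to any face of $\mathcal{B}_t$ lies $\delta_t$ away in the outward coordinate). For $\dist_\infty(\partial\omega_t,\mathcal{F}_t)\geq\delta_t$, I would pick arbitrary $z\in\partial\omega_t$ and $y\in\mathcal{F}_t$ and use the triangle inequality: since $z\in\omega_t$, there is $x\in\mathcal{B}_t$ with $\|x-z\|_\infty\leq\delta_t$, and since $y\in\mathcal{F}_t$ we have $\|x-y\|_\infty\geq\diam_\infty(\mathcal{B}_t)=2\delta_t$, hence $\|z-y\|_\infty\geq 2\delta_t-\delta_t=\delta_t$.

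For the parametrization statements, I would observe from (\ref{eq:Phi_t}) that $\Phi_t$ is affine with diagonal Jacobian whose $\iota$-th entry is $(b_{t,\iota}-a_{t,\iota}+2\delta_t)/2$; this is bounded by $2\delta_t$ and achieves $2\delta_t$ at the index where $b_{t,\iota}-a_{t,\iota}=2\delta_t$, yielding $\|D\Phi_t\|_2=2\delta_t$ and bijectivity of $\Phi_t$ onto $\omega_t$. Each $\gamma_\iota$ is the restriction of $\Phi_t$ to a $(d{-}1)$-dimensional face of $[-1,1]^d$, so $D\gamma_\iota$ is obtained by deleting the column of $D\Phi_t$ corresponding to the fixed coordinate; the spectral norm of a submatrix cannot exceed that of the full matrix, so $\|D\gamma_\iota\|_2\leq 2\delta_t$, and $\gamma_\iota$ is an affine bijection onto its face.

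There is no real obstacle here: every claim reduces to a one-line computation once the definitions are laid out. The only place where a small argument is genuinely needed is the farfield distance bound, and there the only mild care required is to select a point $x\in\mathcal{B}_t$ witnessing $\|x-z\|_\infty\leq\delta_t$ before applying the triangle inequality against $y\in\mathcal{F}_t$.
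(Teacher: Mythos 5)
Your proposal is correct and follows essentially the same approach as the paper: the easy claims are read off directly from the side lengths of $\omega_t$, and the farfield-distance bound is obtained by projecting a boundary point onto $\mathcal{B}_t$ and applying the triangle inequality together with the farfield definition. The paper's version of this last step is written in terms of a minimizing pair and has a small notational slip, but the substance is identical to yours.
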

\begin{proof}
cf. Appendix~\ref{se:appendix}.
\end{proof}

Combining these lower bounds for the distances between $x$ and
$\partial\omega_t$ and $y$ and $\partial\omega_t$, respectively,
with the estimate (\ref{eq:asymptotically_smooth}) allows us to
prove that the requirements of Theorem~\ref{th:chebyshev} are
fulfilled.

%
%
\begin{lemma}[Derivatives]
\label{le:derivatives}
Let $x\in{\mathcal B}_t$ and $y\in{\mathcal F}_t$.
Then we have
\begin{align*}
  |\partial^{\hat\nu} \hat f_1(\hat z)|,
  &\leq \frac{C_{\rm as} \hat\nu!}{\delta_t^\sigma} (2 c_0)^{|\hat\nu|}, &
  |\partial^{\hat\nu} \hat f_2(\hat z)|,
  &\leq \frac{C_{\rm as} \hat\nu!}{\delta_t^\sigma} (2 c_0)^{|\hat\nu|},\\
  |\partial^{\hat\nu} \hat g_1(\hat z)|,
  &\leq \frac{C_{\rm as} \hat\nu!}{\delta_t^{\sigma+1}} (2 c_0)^{|\hat\nu|}, &
  |\partial^{\hat\nu} \hat g_2(\hat z)|,
  &\leq \frac{C_{\rm as} \hat\nu!}{\delta_t^{\sigma+1}} (2 c_0)^{|\hat\nu|}
    \text{ for all } \hat z\in Q,\ \hat\nu\in\bbbn_0^{d-1}.
\end{align*}
\end{lemma}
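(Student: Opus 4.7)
The plan is to exploit the very special structure of the parametrization $\gamma_\iota$: by (\ref{eq:Phi_t}) and (\ref{eq:parametrization}), $\gamma_\iota$ is the composition of $\Phi_t$ with an insertion of a fixed coordinate $\pm1$ at position $\iota$. Since $\Phi_t$ is affine with a diagonal linear part whose entries are bounded by $2\delta_t$ (Lemma~\ref{le:omega_t}), the Jacobian $D\gamma_\iota\in\bbbr^{d\times(d-1)}$ has the ``permutation-scaling'' property that each component $\gamma_\iota^{(k)}$ depends on at most one of the variables $\hat z_1,\ldots,\hat z_{d-1}$, and every nonzero entry has modulus at most $2\delta_t$. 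Concretely, there is an injection $k:\{1,\ldots,d-1\}\to\{1,\ldots,d\}\setminus\{\iota^*\}$ such that $\partial\gamma_\iota^{(k)}/\partial\hat z_j$ is nonzero only for $k=k(j)$, with $\iota^*$ denoting the ``frozen'' coordinate of the face.

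Next, I would apply the chain rule to a composition $f\circ\gamma_\iota$ for a smooth $f:\bbbr^d\to\bbbr$. Because of the diagonal structure above, the multivariate chain rule collapses to a single term,
\begin{equation*}
  \partial^{\hat\nu}_{\hat z}\bigl[f(\gamma_\iota(\hat z))\bigr]
  = \prod_{j=1}^{d-1}\Bigl(\frac{\partial\gamma_\iota^{(k(j))}}{\partial\hat z_j}\Bigr)^{\hat\nu_j}
    \cdot (\partial^\mu f)(\gamma_\iota(\hat z)),
\end{equation*}
where $\mu\in\bbbn_0^d$ is defined by $\mu_{k(j)}:=\hat\nu_j$ and $\mu_{\iota^*}:=0$. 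In particular $|\mu|=|\hat\nu|$ and $\mu!=\hat\nu!$, and the scalar prefactor is bounded in modulus by $(2\delta_t)^{|\hat\nu|}$.

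I would then insert the asymptotic smoothness hypothesis (\ref{eq:asymptotically_smooth}) for the four cases in turn. For $\hat f_1$ take $f=g(x,\cdot)$ and use $|\partial^\mu_y g(x,y)|\leq C_{\rm as}\mu!\,c_0^{|\mu|}/\|x-y\|^{\sigma+|\mu|}$; the distance estimate $\|x-\gamma_\iota(\hat z)\|_2\geq\|x-\gamma_\iota(\hat z)\|_\infty\geq\dist_\infty({\mathcal B}_t,\partial\omega_t)=\delta_t$ from Lemma~\ref{le:omega_t} gives the denominator $\delta_t^{\sigma+|\hat\nu|}$, and combining with the prefactor $(2\delta_t)^{|\hat\nu|}$ produces $C_{\rm as}\hat\nu!(2c_0)^{|\hat\nu|}/\delta_t^\sigma$. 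The case $\hat f_2$ is symmetric with $y\in{\mathcal F}_t$ in place of $x$, using the third inequality in (\ref{eq:diam_dist}). For $\hat g_1,\hat g_2$, the inner function is now $(\partial/\partial n_\iota)g$, which since $n_\iota$ is an axis unit vector is a single first-order partial in the coordinate $\iota^*$; applying the chain rule as before shifts $\mu$ to $\mu+e_{\iota^*}$, which does not change $\hat\nu!$ but raises the denominator power by one, giving the claimed $\delta_t^{\sigma+1}$.

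The only genuinely substantive step is the combinatorial collapse in the chain rule; once the permutation-scaling structure of $D\gamma_\iota$ is recorded, everything else is bookkeeping, and I do not expect any analytic obstacle. The only care required is distinguishing derivatives acting on the first versus second argument of $g$ (so that the appropriate one of the two distance estimates in (\ref{eq:diam_dist}) is applied) and verifying that $(\mu+e_{\iota^*})!=\hat\nu!$ because $\mu$ has vanishing entry at position $\iota^*$.
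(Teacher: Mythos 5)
Your proposal is correct and matches the paper's own proof in Appendix~A in all essentials: both arguments observe that $\gamma_\iota = \Phi_t\circ(\text{coordinate insertion})$ is affine with a diagonal linear part whose entries are bounded by $2\delta_t$, so the chain rule for $\partial^{\hat\nu}$ collapses to a single term $s^{\nu}(\partial^{\nu+\kappa}g)(\cdot,\gamma_\iota(\hat z))$, and then the asymptotic smoothness (\ref{eq:asymptotically_smooth}) combined with the lower bound $\dist_\infty(\mathcal B_t,\partial\omega_t)=\dist_\infty(\partial\omega_t,\mathcal F_t)\geq\delta_t$ from Lemma~\ref{le:omega_t} gives the claimed estimate. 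The only cosmetic difference is that the paper establishes the chain-rule collapse by an explicit induction on the order of the derivative applied to $g_x(\hat z)=(\partial^\kappa g)(x,\Phi_t(\hat z))$ and then restricts to the face $\gamma_\iota$, whereas you invoke it directly from the sub-diagonal structure of $D\gamma_\iota$; the bookkeeping $(\mu+e_{\iota^*})!=\hat\nu!$ and $|\mu|=|\hat\nu|$ is identical in both.
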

\begin{proof}
cf. Appendix~\ref{se:appendix}
\end{proof}

Due to Lemma~\ref{le:derivatives}, the conditions of
Theorem~\ref{th:chebyshev} are fulfilled, so we can obtain the
polynomial approximations required by Lemma~\ref{le:quadrature_product}
and prove that the quadrature approximation converges exponentially.

%
%
\begin{theorem}[Quadrature error]
\label{th:quadrature}
There is a constant $C_{\rm gr}\in\bbbr_{\geq 0}$ depending only on
$C_{\rm as}$, $c_0$ and $d$ such that
\begin{align*}
  |g(x,y) - \tilde g_t(x,y)|
  &\leq \frac{C_{\rm gr}}{\delta_t^{2\sigma-d+2}}
        \left(\frac{2c_0}{2c_0+1}\right)^{m-1} &
  &\text{ for all } x\in{\mathcal B}_t,\ y\in{\mathcal F}_t,
\end{align*}
i.e., the quadrature approximation $\tilde g_t$ given by
(\ref{eq:green_g_apx}) is exponentially convergent with respect to $m$.
\end{theorem}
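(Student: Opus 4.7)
The plan is to begin from Green's representation formula~(\ref{eq:green_g}) applied to the domain $\omega_t$, which is an exact identity for $x\in\mathcal{B}_t\subseteq\omega_t$ and $y\in\mathcal{F}_t$ (the required separation is ensured by Lemma~\ref{le:omega_t}). Parametrizing each of the $2d$ faces of $\partial\omega_t$ through $\gamma_\iota$ and pulling back to $Q=[-1,1]^{d-1}$, the error $g(x,y)-\tilde g_t(x,y)$ decomposes into a sum of $2d$ terms, each of which is the tensor Gauss quadrature error on $Q$ of the integrand $\hat f_1\hat g_1-\hat f_2\hat g_2$ with $\hat f_i,\hat g_i$ as in~(\ref{eq:fg_first}), multiplied by the Jacobian factor $\sqrt{\det D\gamma_\iota^*D\gamma_\iota}\leq(2\delta_t)^{d-1}$ supplied by~(\ref{eq:derivative}).

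To bound each quadrature error I would apply Lemma~\ref{le:quadrature_product} separately to $\hat f_1\hat g_1$ and $\hat f_2\hat g_2$. The required polynomial approximants $\hat p_i\in\mathcal{Q}_m$ of $\hat f_i$ and $\hat q_i\in\mathcal{Q}_{m-1}$ of $\hat g_i$ are produced by Theorem~\ref{th:chebyshev}, whose analyticity hypothesis~(\ref{eq:analyticity}) is verified directly from Lemma~\ref{le:derivatives} with $\gamma_f=1/(2c_0)$ and $C_f=C_{\rm as}\delta_t^{-\sigma}$ for the $\hat f_i$, respectively $C_f=C_{\rm as}\delta_t^{-\sigma-1}$ for the $\hat g_i$. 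Since $Q$ has side length $2$, the resulting decay factor is $\varrho(1/(2c_0))^{-m}$; the strict inequality $\varrho(r)>r+1$ yields $\varrho(1/(2c_0))^{-1}<2c_0/(2c_0+1)$, and this strict margin is used to absorb the polynomial prefactor $(m+1)$ from Theorem~\ref{th:chebyshev} into a slightly weaker geometric rate, producing $\|\hat f_i-\hat p_i\|_{\infty,Q}\leq C_1\delta_t^{-\sigma}(2c_0/(2c_0+1))^{m-1}$ and $\|\hat g_i-\hat q_i\|_{\infty,Q}\leq C_1\delta_t^{-\sigma-1}(2c_0/(2c_0+1))^{m-1}$ with $C_1$ depending only on $d$, $c_0$, $C_{\rm as}$.

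Finally, combining these approximation errors with the sup-norm bounds $\|\hat f_i\|_{\infty,Q}\leq C_{\rm as}\delta_t^{-\sigma}$ and $\|\hat g_i\|_{\infty,Q}\leq C_{\rm as}\delta_t^{-\sigma-1}$ (the $\hat\nu=0$ case of Lemma~\ref{le:derivatives}) in Lemma~\ref{le:quadrature_product} makes each of the three mixed terms there of order $\delta_t^{-2\sigma-1}(2c_0/(2c_0+1))^{m-1}$. Multiplying by the Jacobian bound $(2\delta_t)^{d-1}$ and summing over the two products $\hat f_1\hat g_1$, $\hat f_2\hat g_2$ and over the $2d$ faces collects the $\delta_t$ powers to $\delta_t^{d-1-2\sigma-1}=\delta_t^{-(2\sigma-d+2)}$, which is exactly the claimed rate with a constant $C_{\rm gr}$ depending only on $C_{\rm as}$, $c_0$ and $d$. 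The only slightly delicate step is the polynomial-into-geometric absorption in the previous paragraph, which hinges on the strict inequality in the definition of $\varrho$; everything else is careful bookkeeping of the $\delta_t$ powers through the chain of auxiliary results.
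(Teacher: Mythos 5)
Your proposal follows essentially the same route as the paper: pull back Green's formula to the reference face $Q$, bound the tensor-Gauss quadrature error via Lemma~\ref{le:quadrature_product} applied to $\hat f_1\hat g_1$ and $\hat f_2\hat g_2$, obtain the polynomial approximants from Theorem~\ref{th:chebyshev} with the analyticity parameters supplied by Lemma~\ref{le:derivatives}, absorb the polynomial prefactor into the slack between $\varrho(r)^{-1}$ and $(r+1)^{-1}$ exactly as the paper does with its $\zeta$-trick, and collect the $\delta_t$-powers through the Jacobian factor $(2\delta_t)^{d-1}$. The argument is correct and matches the paper's proof in structure and in all essential steps.
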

\begin{proof}
In order to apply Lemma~\ref{le:quadrature_product}, we have to construct
polynomials approximating $\hat f$ and $\hat g$.

Let $\hat f\in\{\hat f_1,\hat f_2\}$.
According to Lemma~\ref{le:derivatives}, we can apply
Theorem~\ref{th:chebyshev} to $\hat f$ using
\begin{align*}
  C_f &:= \frac{C_{\rm as}}{\delta_t^\sigma}, &
  \gamma_f &:= \frac{1}{2c_0}
\end{align*}
to obtain
\begin{equation*}
  \|\hat f - {\mathfrak I}_m^Q[\hat f]\|_{\infty,Q}
  \leq C_{\rm in}(m) C_f
        \varrho\left(\frac{1}{2 c_0}\right)^{-m}
\end{equation*}
with the polynomial $C_{\rm in}(m):=2 d e (m+2)^d (m+1) (1+4c_0)$.
We fix
\begin{align*}
  r &:= \frac{1}{2 c_0}, &
  \zeta &:= \frac{r+1}{\varrho(r)} < 1
\end{align*}
and find
\begin{align*}
  \|\hat f - {\mathfrak I}_m^Q[\hat f]\|_{\infty,Q}
  &\leq C_{\rm in}(m) C_f
         \varrho\left(\frac{1}{2 c_0}\right)^{-m}
   = \frac{C_{\rm in}(m) C_{\rm as}}{\delta_t^\sigma}
         \varrho(r)^{-m}\\
  &= \frac{C_{\rm in}(m) C_{\rm as}}{\delta_t^\sigma}
          \zeta^m (r+1)^{-m}.
\end{align*}
Due to $\zeta<1$, we can find a constant
\begin{equation*}
  C_{\rm apx} := \sup\{ C_{\rm in}(m) C_{\rm as} \zeta^m
                       \ :\ m\in\bbbn_0 \}
\end{equation*}
independent of $m$, $t$ and $\sigma$ such that
\begin{subequations}\label{eq:chebyshev_final}
\begin{align}
  \|\hat f - {\mathfrak I}_m^Q[\hat f]\|_{\infty,Q}
  &\leq \frac{C_{\rm apx}}{\delta_t^\sigma} (r+1)^{-m}
   = \frac{C_{\rm apx}}{\delta_t^\sigma}
        \left(\frac{2 c_0}{2 c_0 + 1}\right)^m &
  &\text{ for all } m\in\bbbn.
\end{align}
We can apply the same reasoning to $\hat g\in\{\hat g_1,\hat g_2\}$
to obtain
\begin{align}
  \|\hat g - {\mathfrak I}_{m-1}^Q[\hat g]\|_{\infty,Q}
  &\leq \frac{C_{\rm apx}'}{\delta_t^{\sigma+1}} (r+1)^{-m+1}
   = \frac{C_{\rm apx}'}{\delta_t^{\sigma+1}}
        \left(\frac{2 c_0}{2 c_0 + 1}\right)^{m-1} &
  &\text{ for all } m\in\bbbn
\end{align}
\end{subequations}
with a suitable constant $C_{\rm apx}'\in\bbbr_{\geq 0}$.

Now we can focus on the final estimate.
We have
\begin{subequations}
\begin{align}
  |g(x,y) &- \tilde g_t(x,y)|
   \leq \left| \int_{\partial\omega_t}
                    g(x,z)
                    \frac{\partial g}{\partial n(z)}(z,y) \,dz
             - \sum_{\nu=1}^k w_\nu
                    g(x,z_\nu)
                    \frac{\partial g}{\partial n(z_\nu)}(z_\nu,y) \right|
               \label{eq:quadrature_1}\\
  &\qquad + \left| \int_{\partial\omega_t}
                    \frac{\partial g}{\partial n(z)}(x,z)
                    g(z,y) \,dz
               - \sum_{\nu=1}^k w_\nu
                    \frac{\partial g}{\partial n(z_\nu)}(z_\nu,y)
                    g(x,z_\nu) \right|.
               \label{eq:quadrature_2}
\end{align}
\end{subequations}
For the first term, we use
\begin{align*}
  \int_{\partial\omega_t} g(x,z) \frac{\partial g}{\partial n(z)}(z,y) \,dz
  &= \sum_{\iota=1}^{2d} \sqrt{\det(D\gamma_\iota^* D\gamma_\iota)}
       \int_Q \hat f_1(\hat z) \hat g_1(\hat z) \,d\hat z,\\
  \sum_{\nu\in K} w_\nu g(x,z_\nu)
          \frac{\partial g}{\partial n(z_\nu)}(z_\nu,y)
  &= \sum_{\iota=1}^{2d} \sqrt{\det(D\gamma_\iota^* D\gamma_\iota)}
        \sum_{\mu\in M} \hat w_\mu \hat f_1(\hat x_\mu) \hat g_1(\hat x_\mu)
\end{align*}
to obtain
\begin{align*}
  &\int_{\partial\omega_t} g(x,z) \frac{\partial g}{\partial n(z)}(z,y) \,dz
     - \sum_{\nu\in K} w_\nu g(x,z_\nu)
               \frac{\partial g}{\partial n(z_\nu)}(z_\nu,y)\\
  &= \sum_{\iota=1}^{2d} \sqrt{\det(D\gamma_\iota^* D\gamma_\iota)}
       \left( \int_Q \hat f_1(\hat z) \hat g_1(\hat z) \,d\hat z
              - \sum_{\mu\in M} \hat w_\mu \hat f_1(\hat x_\mu)
                                  \hat g_1(\hat x_\mu) \right).
\end{align*}
Due to (\ref{eq:Phi_t}) and (\ref{eq:parametrization}), we have
\begin{equation*}
  \sqrt{\det(D\gamma_\iota^* D\gamma_\iota)}
  \leq (2\delta_t)^{d-1},
\end{equation*}
and we can use Lemma~\ref{le:quadrature_product} to bound the
second term and get
\begin{align*}
  &\left|\int_{\partial\omega_t} g(x,z)
               \frac{\partial g}{\partial n(z)}(z,y) \,dz
     - \sum_{\nu\in K} w_\nu g(x,z_\nu)
               \frac{\partial g}{\partial n(z_\nu)}(z_\nu,y)\right|\\
  &\leq (2\delta_t)^{d-1} 2^d
         ( \|\hat f_1-\hat p_1\|_{\infty,Q} \|\hat g_1\|_{\infty,Q}
         + \|\hat f_1\|_{\infty,Q} \|\hat g_1-\hat q_1\|_{\infty,Q}\\
  &\qquad + \|\hat f_1-\hat p_1\|_{\infty,Q} \|\hat g_1-\hat q_1\|_{\infty,Q} )
\end{align*}
for any $\hat p_1\in{\mathcal Q}_m$ and $\hat q_1\in{\mathcal Q}_{m-1}$.
It comes as no surprise that we use the tensor Chebyshev interpolation
polynomials $\hat p_1 := {\mathfrak I}_m^Q[\hat f_1]$ and
$\hat q_1 := {\mathfrak I}_{m-1}^Q[\hat g_1]$ investigated before.
The inequalities (\ref{eq:chebyshev_final}) provide us with estimates for
the interpolation error, while Lemma~\ref{le:omega_t} in combination
with (\ref{eq:asymptotically_smooth}) yields
\begin{align*}
  \|\hat f_1\|_{\infty,Q} &\leq \frac{C_{\rm as}}{\delta_t^\sigma}, &
  \|\hat g_1\|_{\infty,Q} &\leq \frac{C_{\rm as} c_0}{\delta_t^{\sigma+1}}.
\end{align*}
Combining both estimates we find
\begin{align*}
  &\left|\int_{\partial\omega_t} g(x,z)
              \frac{\partial g}{\partial n(z)}(z,y) \,dz
       - \sum_{\nu\in K} w_\nu g(x,z_\nu)
              \frac{\partial g}{\partial n(z_\nu)}(z_\nu,y)\right|\\
  &\leq 2^{2d-1} \delta_t^{d-1}
        \left( \frac{C_{\rm apx} C_{\rm as} c_0}{\delta_t^{2\sigma+1}}
                  \left(\frac{2c_0}{2c_0+1}\right)^m
               + \frac{C_{\rm apx}' C_{\rm as}}{\delta_t^{2\sigma+1}}
                  \left(\frac{2c_0}{2c_0+1}\right)^{m-1}\right.\\
  &\qquad \left. + \frac{C_{\rm apx} C_{\rm apx}'}{\delta_t^{2\sigma+1}}
                  \left(\frac{2c_0}{2c_0+1}\right)^{2m-1} \right)\\
  &\leq \frac{C_{\rm gr,1}}{\delta^{2\sigma-d+2}}
            \left(\frac{2c_0}{2c_0+1}\right)^{m-1}
\end{align*}
with the constant
\begin{equation*}
  C_{\rm gr,1} := 2^{2d-1}
   ( C_{\rm apx} C_{\rm as} c_0 + C_{\rm apx}' C_{\rm as}
       + C_{\rm apx} C_{\rm apx}' ).
\end{equation*}
We can obtain similar estimates for the second integrals
(\ref{eq:quadrature_2}) by exactly the same arguments and conclude
\begin{equation*}
  |g(x,y) - \tilde g_t(x,y)|
  \leq \frac{C_{\rm gr}}{\delta_t^{2\sigma-d+2}}
        \left(\frac{2c_0}{2c_0+1}\right)^{m-1}
\end{equation*}
with the constant $C_{\rm gr} := 4 d C_{\rm gr,1}$.
\qed
\end{proof}

\section{Cross approximation}

The construction outlined in the previous section still offers
room for improvement:
the matrices $A_{t,+}$ and $A_{t,-}$ appearing in (\ref{eq:ab_def})
describe the influence of Neumann and Dirichlet values of $g(\cdot,y)$
on $\partial\omega_t$ to the approximation in $\omega_t$.
Using the Poincar\'e-Steklov operator, we can construct the Neumann
values from the Dirichlet values, therefore we expect that it should
be possible to avoid using $A_{t,+}$ and thus reduce the rank of
our approximation by a factor of two.
Eliminating $A_{t,+}$ explicitly would require us to approximate
the Poincar\'e-Steklov operator and solve an integral equation on
the boundary $\partial\omega_t$, and we would have to reach a fairly
high accuracy in order to preserve the exponential convergence of
the quadrature approximation.

For the sake of efficiency, we choose an implicit approach:
assuming that $A_{t,+}$ can be obtained from $A_{t,-}$ by solving
a linear system, we expect that the rank of the matrix
$A_t=\begin{pmatrix} A_{t,+} & A_{t,-} \end{pmatrix}$ is lower than
the number of its columns.
Therefore we use an algebraic procedure to approximate
the matrix $A_t$ by a lower-rank matrix.
The \emph{adaptive cross approximation} approach \cite{BE00a,BERJ01,TY99}
is particularly attractive in this context, since it allows
us to construct an algebraic interpolation operator that can be
used to approximate matrix blocks based only on a few of their
entries.

The adaptive cross approximation of a matrix $X\in\bbbr^{\Idx\times\Jdx}$
is constructed as follows:
a pair of pivot elements $i_1\in\Idx$ and $j_1\in\Jdx$ are chosen
and the vectors $c^{(1)}\in\bbbr^\Idx$, $d^{(1)}\in\bbbr^\Jdx$ given by
\begin{align*}
  c^{(1)}_i &:= x_{i,j_1} / x_{i_1,j_1}, &
  d^{(1)}_j &:= x_{i_1,j} &
  &\text{ for all } i\in\Idx,\ j\in\Jdx
\end{align*}
are constructed.
The matrix
\begin{equation*}
  \widetilde X^{(1)} := c^{(1)} (d^{(1)})^*
\end{equation*}
satisfies
\begin{align*}
  \widetilde x^{(1)}_{i,j_1}
  &= x_{i,j_1} x_{i_1,j_1} / x_{i_1,j_1} = x_{i,j_1},\\
  \widetilde x^{(1)}_{i_1,j}
  &= x_{i_1,j_1} x_{i_1,j} / x_{i_1,j_1} = x_{i_1,j} &
  &\text{ for all } i\in\Idx,\ j\in\Jdx,
\end{align*}
i.e., it is identical to $X$ in the $i$-th row and the $j$-th column
(the eponymous ``cross'' formed by this row and column).
The remainder matrix
\begin{equation*}
  X^{(1)} := X - \widetilde X^{(1)}
\end{equation*}
therefore vanishes in this row and column.
If $X^{(1)}$ is considered small enough in a suitable sense, we use
$\widetilde X^{(1)}$ as a rank-one approximation of $X$.
Otherwise, we proceed by induction:
if $X^{(k)}$ is not sufficiently small for $k\in\bbbn$, we construct
a cross approximation $\widetilde X^{(k+1)} = c^{(k+1)} (d^{(k+1)})^*$ and
let
\begin{equation*}
  X^{(k+1)} := X^{(k)} - \widetilde X^{(k+1)}
  = X - \sum_{\nu=1}^{k+1} \widetilde X^{(\nu)}.
\end{equation*}
If $X^{(k)}$ is small enough, the matrix
\begin{equation*}
  \sum_{\nu=1}^k \widetilde X^{(\nu)}
   = \sum_{\nu=1}^k c^{(\nu)} (d^{(\nu)})^*
   = C D^*,
\end{equation*}
with
\begin{align*}
  C &:= \begin{pmatrix}
    c^{(1)} & \ldots & c^{(k)}
  \end{pmatrix}, &
  D &:= \begin{pmatrix}
    d^{(1)} & \ldots & d^{(k)}
  \end{pmatrix}
\end{align*}
is a rank-$k$ approximation of $X$. 

This approximation can be interpreted in terms of an algebraic
interpolation:
we introduce the matrix $P\in\bbbr^{k\times\Idx}$ by
\begin{align*}
  P z &:= \begin{pmatrix}
   z_{i_1}\\ \vdots\\ z_{i_k}
  \end{pmatrix} &
  &\text{ for all } z\in\bbbr^\Idx
\end{align*}
mapping a vector to the selected pivot elements.
The pivot elements play the role of interpolation points in our
reformulation of the cross approximation method.

We also need an equivalent of Lagrange polynomials.
Since the algorithm introduces zero rows and columns to the remainder
matrices
\begin{equation*}
  X^{(\ell)} = X - \widetilde X^{(1)} - \ldots - \widetilde X^{(\ell)}
\end{equation*}
and since the vectors $c^{(1)},\ldots,c^{(k)}$ and $d^{(1)},\ldots,d^{(k)}$
are just scaled columns and rows of the remainder matrices, we have
\begin{align*}
  c_{i\mu} &= c^{(\mu)}_i = 0 &
  &\text{ for all } i\in\{i_1,\ldots,i_{\mu-1}\},\\
  d_{j\mu} &= d^{(\mu)}_j = 0 &
  &\text{ for all } j\in\{j_1,\ldots,j_{\mu-1}\},
\end{align*}
and since the entries of $P C\in\bbbr^{k\times k}$ are given by
\begin{align*}
  (P C)_{\nu\mu} &= c^{(\mu)}_{i_\nu} &
  &\text{ for all } \nu,\mu\in\{1,\ldots,k\},
\end{align*}
this matrix is lower triangular.
Due to our choice of scaling, its diagonal elements are equal to
one, so the matrix is also invertible.

Therefore the matrix
\begin{equation*}
  V := C (P C)^{-1} \in\bbbr^{\Idx\times k}
\end{equation*}
is well-defined.
Its columns play the role of Lagrange polynomials, and the algebraic
interpolation operator given by
\begin{equation*}
  {\mathfrak I} := V P
\end{equation*}
satisfies the projection property
\begin{equation}\label{eq:aca_projection}
  {\mathfrak I} C = V P C = C (P C)^{-1} P C = C.
\end{equation}
Since the rows $i_1,\ldots,i_k$ in $X^{(k)}$ vanish, we have
\begin{equation*}
  0 = P X^{(k)} = P (X - C D^*)
\end{equation*}
and therefore
\begin{equation}\label{eq:aca_lowrank}
  {\mathfrak I} X
  = V P X
  = V P C D^*
  = C D^*,
\end{equation}
i.e., the low-rank approximation $C D^*$ results from algebraic
interpolation.

\section{Hybrid approximation}
\label{cha:hybrid_approximation}

The adaptive cross approximation algorithm gives us a powerful
heuristic method for constructing low-rank approximations of
arbitrary matrices.
In our case, we apply it to reduce the rank of the factorization given
by (\ref{eq:ab_def}), i.e., we apply the adaptive cross approximation
algorithm to the matrix $A_t\in\bbbr^{\hat t\times 2k}$ and obtain a reduced
rank $\ell\in\bbbn$ and matrices $C_t\in\bbbr^{\hat t\times\ell}$,
$D_t\in\bbbr^{2k \times \ell}$ and $P_t\in\bbbr^{\ell\times\hat t}$ such that
\begin{equation*}
  A_t \approx C_t D_t^* = V_t P_t A_t
\end{equation*}
with $V_t := C_t (P_t C_t)^{-1}$.
Combining this approximation with (\ref{eq:green_ab}) yields
\begin{equation*}
  G|_{\hat t\times\hat s}
  \approx A_t B_{ts}^*
  \approx C_t D_t^* B_{ts}^*
  = C_t (B_{ts} D_t)^*,
\end{equation*}
i.e., we have reduced the rank from $2k$ to $\ell$.

We can avoid computing the matrices $B_{ts}$ entirely by using algebraic
interpolation:
for ${\mathfrak I}_t := V_t P_t$, the projection property
(\ref{eq:aca_projection}) yields
\begin{equation*}
  G|_{\hat t\times\hat s}
  \approx C_t (B_{ts} D_t)^*
  = {\mathfrak I}_t C_t (B_{ts} D_t)^*
  \approx {\mathfrak I}_t G|_{\hat t\times\hat s}.
\end{equation*}
This approach has the advantage that we can prepare and store the
matrices $C_t\in\bbbr^{\hat t\times\ell}$, $P_t\in\bbbr^{\ell\times\hat t}$
and $P_t C_t\in\bbbr^{\ell\times\ell}$ in a setup phase.
Since $A_t$ depends only on the cluster $t$, but not on an entire
block, this phase involves only a sweep across the cluster tree
that does not lead to a large work-load.

Once the matrices have been prepared, an approximation of a block
$G|_{\hat t\times\hat s}$ can be found by computing its pivot rows
$P_t G|_{\hat t\times\hat s}$, obtaining $\widetilde B_{ts}$ through solving
the linear system
\begin{equation}\label{eq:hybrid_system}
  (P_t C_t) \widetilde B_{ts}^* = P_t G|_{\hat t\times\hat s}
\end{equation}
by forward substitution, and storing the rank-$\ell$-approximation
\begin{equation}\label{eq:hybrid}
  C_t \widetilde B_{ts}^* = C_t (P_t C_t)^{-1} P_t G|_{\hat t\times\hat s}
  = {\mathfrak I}_t G|_{\hat t\times\hat s}.
\end{equation}
None of these operations involves the quadrature rank $2k$, the
computational work is determined by the reduced rank $\ell$.

%
%
\begin{remark}[Complexity]
Computing the rank $\ell$ cross approximation of
$A_t\in\bbbr^{\hat t\times 2k}$ for one cluster $t\in\ctI$ requires
${\mathcal O}(\ell k \#\hat t)$ operations.
Similar to \cite[Lemma~2.4]{GRHA02}, we can conclude that the
cross approximation for all clusters requires not more than
${\mathcal O}(\ell k n \log n)\subseteq{\mathcal O}(\ell m^{d-1} n \log n)$
operations.

Solving the linear system (\ref{eq:hybrid_system}) for one block
$b=(t,s)\in\lfaIJ$ requires not more than ${\mathcal O}(\ell^2 \#\hat s)$
operations, and we can follow the reasoning of \cite[Lemma~2.9]{GRHA02}
to obtain a bound of ${\mathcal O}(\ell^2 n \log n)$ for the computational
work involved in setting up all blocks by the hybrid method.
\end{remark}

%
%
\begin{lemma}[Hybrid approximation]
Let $b=(t,s)\in\ctIJ$ be an admissible block.
Then we have
\begin{equation}\label{eq:hybrid_error_1}
  \| G|_{\hat t\times\hat s} - C_t \widetilde B_{ts}^* \|_2
  \leq (1 + \|{\mathfrak I}_t\|_2)
          \| G|_{\hat t\times\hat s} - A_t B_{ts}^* \|_2
       + \| A_t - C_t D_t^* \|_2 \| B_{ts}^* \|_2.
\end{equation}
\end{lemma}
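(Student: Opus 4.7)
The plan is to exploit the identity $C_t \widetilde B_{ts}^* = \mathfrak{I}_t G|_{\hat t\times\hat s}$ from (\ref{eq:hybrid}) and then split the error by inserting the intermediate quadrature approximation $A_t B_{ts}^*$. Concretely, I would write
\begin{equation*}
  G|_{\hat t\times\hat s} - C_t \widetilde B_{ts}^*
  = (G|_{\hat t\times\hat s} - A_t B_{ts}^*)
    + (A_t B_{ts}^* - \mathfrak{I}_t A_t B_{ts}^*)
    - \mathfrak{I}_t (G|_{\hat t\times\hat s} - A_t B_{ts}^*),
\end{equation*}
which rearranges to
\begin{equation*}
  G|_{\hat t\times\hat s} - \mathfrak{I}_t G|_{\hat t\times\hat s}
  = (I - \mathfrak{I}_t)(G|_{\hat t\times\hat s} - A_t B_{ts}^*)
    + (A_t - \mathfrak{I}_t A_t) B_{ts}^*.
\end{equation*}

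For the second summand I would use the algebraic-interpolation identity (\ref{eq:aca_lowrank}) applied to $X = A_t$, which gives $\mathfrak{I}_t A_t = C_t D_t^*$, so that $(A_t - \mathfrak{I}_t A_t) B_{ts}^* = (A_t - C_t D_t^*) B_{ts}^*$. The factored form of the error now matches the two quantities that appear on the right-hand side of (\ref{eq:hybrid_error_1}): the quadrature error $G|_{\hat t\times\hat s} - A_t B_{ts}^*$ and the cross-approximation error $A_t - C_t D_t^*$.

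To finish, I would apply the triangle inequality and the submultiplicativity of the spectral norm to obtain
\begin{equation*}
  \|G|_{\hat t\times\hat s} - C_t \widetilde B_{ts}^*\|_2
  \le \|I - \mathfrak{I}_t\|_2 \, \|G|_{\hat t\times\hat s} - A_t B_{ts}^*\|_2
    + \|A_t - C_t D_t^*\|_2 \, \|B_{ts}^*\|_2,
\end{equation*}
and then bound $\|I - \mathfrak{I}_t\|_2 \le 1 + \|\mathfrak{I}_t\|_2$, which gives exactly (\ref{eq:hybrid_error_1}).

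There is no real obstacle here; the entire proof is an algebraic splitting, and the only thing one must be careful about is invoking the projection property of $\mathfrak{I}_t$ in the right form, namely $\mathfrak{I}_t A_t = C_t D_t^*$ rather than $\mathfrak{I}_t C_t = C_t$. Both are immediate from the construction in (\ref{eq:aca_projection}) and (\ref{eq:aca_lowrank}), but using the former makes the $B_{ts}^*$ factor drop out cleanly and produces the second term of the stated bound.
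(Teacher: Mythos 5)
Your proof is correct and matches the paper's own argument: both insert the intermediate terms $A_t B_{ts}^*$ and $\mathfrak{I}_t A_t B_{ts}^* = C_t D_t^* B_{ts}^*$, invoke $C_t\widetilde B_{ts}^* = \mathfrak{I}_t G|_{\hat t\times\hat s}$ and $\mathfrak{I}_t A_t = C_t D_t^*$, and apply the triangle inequality with submultiplicativity. Your regrouping of two of the three summands into $(I-\mathfrak{I}_t)(G|_{\hat t\times\hat s}-A_t B_{ts}^*)$ followed by $\|I-\mathfrak{I}_t\|_2\le 1+\|\mathfrak{I}_t\|_2$ is just a cosmetic repackaging of the same estimate.
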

\begin{proof}
Since (\ref{eq:aca_lowrank}) implies $C_t D_t^* = {\mathfrak I}_t A_t$,
we can use (\ref{eq:hybrid}) to obtain
\begin{align*}
  \| G|_{\hat t\times\hat s} &- C_t \widetilde B_{ts}^* \|_2
   = \| G|_{\hat t\times\hat s} - A_t B_{ts}^*
        + A_t B_{ts}^*
        - C_t D_t^* B_{ts}^*
        + C_t D_t^* B_{ts}^*
        - C_t \widetilde B_{ts}^* \|_2\\
  &\leq \| G|_{\hat t\times\hat s} - A_t B_{ts}^* \|_2
        + \| (A_t - C_t D_t^*) B_{ts}^* \|_2
        + \| {\mathfrak I}_t (A_t B_{ts}^* - G|_{\hat t\times\hat s}) \|_2\\
  &\leq \| G|_{\hat t\times\hat s} - A_t B_{ts}^* \|_2
        + \| A_t - C_t D_t^* \|_2 \| B_{ts}^* \|_2
        + \| {\mathfrak I}_t \|_2
            \| G|_{\hat t\times\hat s} - A_t B_{ts}^* \|_2\\
  &= (1 + \|{\mathfrak I}_t\|_2)
        \| G|_{\hat t\times\hat s} - A_t B_{ts}^* \|_2
        + \| A_t - C_t D_t^* \|_2 \| B_{ts}^* \|_2.
\end{align*}
\qed
\end{proof}

The error estimate (\ref{eq:hybrid_error_1}) contains two terms
that we can control directly:
the error of the analytical approximation
\begin{equation*}
  \|G|_{\hat t\times\hat s} - A_t B_{ts}^*\|_2
\end{equation*}
and the error of the cross approximation
\begin{equation*}
  \|A_t - C_t D_t^*\|_2.
\end{equation*}
The first error depends directly on the error of the kernel
approximation (cf. \cite[Section~4.6]{BO10} for a detailed analysis),
and Theorem~\ref{th:quadrature} allows us to reduce it to any chosen
accuracy, using results like \cite[Lemma~4.44]{BO10} to switch from
the maximum norm to the spectral norm.

The second error can be controlled directly by monitoring the
remainder matrices appearing in the cross approximation algorithm.

Therefore we only have to address the additional factors
$1+\|{\mathfrak I}_t\|_2$ and $\|B_{ts}^*\|_2$.

The norm $\|{\mathfrak I}_t\|_2$ is the algebraic counterpart of
the Lebesgue constant of standard interpolation methods.
We can monitor this norm explicitly:
since $P_t$ is surjective, we have $\|{\mathfrak I}_t\|_2=\|V_t\|_2$
and can obtain bounds for this matrix during the construction of
the cross approximation.
Should the product
\begin{equation*}
  (1+\|{\mathfrak I}_t\|_2)
  \|G|_{\hat t\times\hat s} - A_t B_{ts}^*\|_2
\end{equation*}
become too large, we can increase the accuracy of the analytic
approximation of the kernel function to reduce the second term.
A common (as far as we know still unproven) assumption in the
field of cross approximation methods states that
$\|{\mathfrak I}_t\|_2\sim\ell^\alpha$ holds for a small $\alpha>0$
if a suitable pivoting strategy is employed, cf. \cite[eq. (16)]{BEVE12}.

For the analysis of the factor $\|B_{ts}^*\|_2$, we have to take
the choice of basis functions into account.
For the sake of simplicity, we assume that the finite element basis
is stable in the sense that there is a constant $C_\psi\in\bbbr_{>0}$,
possibly depending on the underlying grid, such that
\begin{align}\label{eq:fe_stability}
  \left\| \sum_{j\in\Jdx} u_j \psi_j \right\|_{L^2}
  &\leq C_\psi \|u\|_2 &
  &\text{ for all } u\in\bbbr^\Jdx.
\end{align}

%
%
\begin{lemma}[Scaling factor]
\label{le:hybrid_scaling}
There is a constant $C_{\rm sf}\in\bbbr_{>0}$ depending only on
$C_{\rm as}$, $c_0$, $|\Omega|$ and $d$ such that
\begin{align*}
  \| B_{ts}^* \|_2 &\leq \frac{C_{\rm sf} C_\psi}{\delta_t^{\sigma-d/2+3/2}} &
  &\text{ for all } (t,s)\in\ctI\times\ctJ
       \text{ with } {\mathcal B}_s\subseteq{\mathcal F}_t.
\end{align*}
\end{lemma}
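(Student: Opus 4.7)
The plan is to estimate $\|B_{ts}^*\|_2$ by bounding $\|B_{ts}^* u\|_2$ for an arbitrary $u\in\bbbr^{\hat s}$. Introducing the finite-element function $v := \sum_{j\in\hat s} u_j\,\psi_j$, the basis stability assumption (\ref{eq:fe_stability}) gives $\|v\|_{L^2(\Omega)}\leq C_\psi \|u\|_2$, and since $v$ is supported in $\mathcal{B}_s\cap\Omega$, Cauchy--Schwarz yields the crude but sufficient $L^1$-bound $\|v\|_{L^1(\Omega)}\leq \sqrt{|\Omega|}\,C_\psi\,\|u\|_2$. This is where the dependence on $|\Omega|$ in the constant enters.

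Next I would write out the individual entries explicitly as
\[
  (B_{ts+}^* u)_\nu = \sqrt{w_\nu}\int_\Omega v(y)\,\frac{\partial g}{\partial n_\iota}(z_\nu,y)\,dy,
  \qquad
  (B_{ts-}^* u)_\nu = -\frac{\sqrt{w_\nu}}{\delta_t}\int_\Omega v(y)\,g(z_\nu,y)\,dy,
\]
for $\nu=(\iota,\mu)\in K$ with $z_\nu\in\partial\omega_t$. Since $(t,s)$ is admissible we have $\supp v\subseteq\mathcal{B}_s\subseteq\mathcal{F}_t$, while $z_\nu\in\partial\omega_t$, so Lemma~\ref{le:omega_t} forces $\|y-z_\nu\|_2\geq\|y-z_\nu\|_\infty\geq\delta_t$ for every $y\in\supp v$. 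Combining this distance bound with the asymptotic-smoothness estimate (\ref{eq:asymptotically_smooth}) produces the uniform pointwise bounds $|g(z_\nu,y)|\leq C_{\rm as}/\delta_t^\sigma$ and $|\partial g/\partial n_\iota(z_\nu,y)|\leq C_{\rm as}c_0/\delta_t^{\sigma+1}$ on $\supp v$.

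From here the calculation is bookkeeping. The $1/\delta_t$ rescaling built into $B_{ts-}$ (added precisely for this purpose after (\ref{eq:green_g_apx})) compensates for the weaker singularity of $g$ versus its normal derivative, so both entry families acquire the same uniform prefactor proportional to $\delta_t^{-(\sigma+1)}$ once the kernel bounds are pulled out and the $L^1$-estimate for $v$ is inserted. Summing the squared entries over $\nu\in K$ produces a factor $\sum_{\nu\in K} w_\nu \leq 2d(4\delta_t)^{d-1}$ via Lemma~\ref{le:omega_t}, giving
\[
  \|B_{ts}^* u\|_2^2 = \|B_{ts+}^* u\|_2^2 + \|B_{ts-}^* u\|_2^2
  \leq C\,C_\psi^2\,\|u\|_2^2\,\delta_t^{d-1-2(\sigma+1)},
\]
with $C$ depending only on $C_{\rm as}$, $c_0$, $|\Omega|$ and $d$. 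Taking the square root and simplifying the exponent $d-1-2(\sigma+1)=-(2\sigma-d+3)$ reproduces the claimed power $\delta_t^{-(\sigma-d/2+3/2)}$.

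There is no real conceptual obstacle; the only point worth flagging is the role of the rescaling introduced after (\ref{eq:green_g_apx}), which is exactly what allows the $B_{ts+}$ and $B_{ts-}$ contributions to be collected into a single power of $\delta_t$ instead of two different ones. Everything else reduces to Cauchy--Schwarz, the uniform distance estimate from Lemma~\ref{le:omega_t}, and the pointwise kernel bound coming from asymptotic smoothness.
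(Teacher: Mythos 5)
Your proof is correct and rests on exactly the same ingredients as the paper's: the block decomposition $B_{ts}=\begin{pmatrix}B_{ts+}&B_{ts-}\end{pmatrix}$, the finite-element stability assumption (\ref{eq:fe_stability}), the asymptotic-smoothness bound (\ref{eq:asymptotically_smooth}) combined with the distance estimate $\dist_\infty(\partial\omega_t,\mathcal{F}_t)\geq\delta_t$ from Lemma~\ref{le:omega_t}, the cancellation of the singularity orders produced by the $1/\delta_t$ rescaling in $B_{ts-}$, and the surface-measure bound $\sum_{\nu\in K}w_\nu=|\partial\omega_t|\leq 2d(4\delta_t)^{d-1}$. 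The organization is different: you bound each entry of $B_{ts\pm}^*u$ pointwise through an $L^\infty$--$L^1$ pairing of the kernel against $v=\sum_j u_j\psi_j$ and then sum the squares, whereas the paper works with the bilinear form $\langle B_{ts+}^*u,v\rangle_2$ and applies Cauchy--Schwarz twice, first in $L^2(\Omega)$ and then discretely in $\bbbr^K$, with the quadrature exactness for constants absorbing $\sum_\nu w_\nu$. The two routes produce the same final constant because the paper's $L^2$-step also ends up integrating a uniform pointwise kernel bound over $\Omega$. One small advantage of your presentation is that you restrict the $y$-integration to $\supp v\subseteq\mathcal{B}_s\subseteq\mathcal{F}_t$ \emph{before} inserting the bound $\|z_\nu-y\|\geq\delta_t$; the paper formally applies this estimate inside an $L^2(\Omega)$-norm, which is only legitimate after the same support restriction (harmless, since the finite-element factor vanishes outside $\mathcal{B}_s$, but your version states this explicitly).
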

\begin{proof}
By definition (\ref{eq:green_ab}), we have
\begin{equation*}
  B_{ts} = \begin{pmatrix} B_{ts+} & B_{ts-} \end{pmatrix}
\end{equation*}
and therefore
\begin{equation*}
  \| B_{ts}^* \|_2
  = \left\| \begin{pmatrix} B_{ts+}^*\\ B_{ts-}^* \end{pmatrix} \right\|_2
  \leq \sqrt{\|B_{ts+}^*\|_2^2 + \|B_{ts-}^*\|_2^2}.
\end{equation*}
We focus on $B_{ts+}$.
A simple application of the Cauchy-Schwarz inequality yields
\begin{equation*}
  \| B_{ts+}^* \|_2
  = \sup_{\substack{u\in\bbbr^{\hat s}\setminus\{0\}\\
                    v\in\bbbr^K\setminus\{0\}}}
     \frac{\langle B_{ts+}^* u, v \rangle_2}{\|u\|_2 \|v\|_2}.
\end{equation*}
Let $u\in\bbbr^{\hat s}$ and $v\in\bbbr^K$.
We use the definition (\ref{eq:ab_def}) and apply the Cauchy-Schwarz
inequality first to the inner product in $L^2(\Omega)$ and then to the
Euclidean one to get
\begin{align*}
  \langle B_{ts+}^* u, v \rangle_2
  &= \sum_{j\in\hat s} \sum_{\nu\in K} b_{ts,j\nu} u_j v_\nu
   = \int_\Omega \sum_{j\in\hat s} u_j \psi_j(y)
           \sum_{\nu\in K} v_\nu \sqrt{w_\nu}
           \frac{\partial g}{\partial n(z_\nu)}(z_\nu,y) \,dy\\
  &\leq \left( \int_\Omega \left(\sum_{j\in\hat s} u_j \psi_j(y)\right)^2 \,dy
          \right)^{1/2}\\
  &\qquad \left( \int_\Omega \left( \sum_{\nu\in K} v_\nu \sqrt{w_\nu}
             \frac{\partial g}{\partial n(z_\nu)}(z_\nu,y)\right)^2 \,dy
          \right)^{1/2}\\
  &= \left\| \sum_{j\in\hat s} u_j \psi_j \right\|_{L^2}
     \left( \int_\Omega \left( \sum_{\nu\in K} v_\nu \sqrt{w_\nu}
             \frac{\partial g}{\partial n(z_\nu)}(z_\nu,y)\right)^2 \,dy
          \right)^{1/2}\\
  &\leq C_\psi \|u\|_2
     \left( \int_\Omega \left( \sum_{\nu\in K} v_\nu^2 \right)
               \left( \sum_{\nu\in K} w_\nu
               \left(\frac{\partial g}{\partial n(z_\nu)}(z_\nu,y)\right)^2
               \right) \,dy \right)^{1/2}\\
  &= C_\psi \|u\|_2 \|v\|_2
         \left( \int_\Omega \sum_{\nu\in K} w_\nu
              \left( \frac{\partial g}{\partial n(z_\nu)}(z_\nu,y) \right)^2
              \,dy \right)^{1/2}.
\end{align*}
The asymptotic smoothness (\ref{eq:asymptotically_smooth}) in combination
with Lemma~\ref{le:omega_t} yields
\begin{equation*}
  \left| \frac{\partial g}{\partial n(z_\nu)}(z_\nu,y) \right|
  \leq \frac{C_{\rm as} c_0}{\|z_\nu-y\|^{\sigma+1}}
  \leq \frac{C_{\rm as} c_0}{\delta_t^{\sigma+1}}.
\end{equation*}
Since the weights are non-negative and the quadrature rule integrates
constants exactly, we have
\begin{equation*}
  \sum_{\nu\in K} w_\nu \left( \frac{\partial g}{\partial n(z_\nu)}(z_\nu,y)
                    \right)^2
  \leq \sum_{\nu\in K} w_\nu \left(\frac{C_{\rm as} c_0}
                                    {\delta_t^{\sigma+1}}\right)^2
  = |\partial\omega_t| \left(\frac{C_{\rm as} c_0}
                                  {\delta_t^{\sigma+1}}\right)^2.
\end{equation*}
We conclude
\begin{align*}
  \langle B_{ts+}^* u, v \rangle_2
  &\leq C_\psi \|u\|_2 \|v\|_2 \left( \int_\Omega |\partial\omega_t|
                  \left(\frac{C_{\rm as} c_0}{\delta_t^{\sigma+1}}\right)^2
                  \,dy \right)^{1/2}\\
  &= \frac{C_\psi C_{\rm as} c_0}{\delta_t^{\sigma+1}}
            \sqrt{|\Omega| |\partial\omega_t|} \|u\|_2 \|v\|_2.
\end{align*}
Inserting $|\partial\omega_t|\leq 2d (4\delta_t)^{d-1} = 2^{2d-1} d
\delta_t^{d-1}$ leads to
\begin{equation*}
  \langle B_{ts+}^* u, v \rangle_2
  \leq \frac{C_\psi C_{\rm as} c_0}{\delta_t^{\sigma+1}}
        \sqrt{|\Omega| 2^{2d-1} d} \delta_t^{d/2-1/2}
        \|u\|_2 \|v\|_2.
\end{equation*}
For $B_{ts-}$, we obtain the same result, since the definition
(\ref{eq:ab_def}) includes the scaling factor $1/\delta_t$.
Combining both estimates and choosing the constant
$C_{\rm sf} := C_{\rm as} c_0 \sqrt{|\Omega| 2^{2d} d}$ completes
the proof.
\qed
\end{proof}

\section{Nested cross approximation}

Applying the algorithm presented so far to admissible blocks
yields a hierarchical matrix approximation of $G$.
We would prefer to obtain an ${\mathcal H}^2$-matrix due to its
significantly lower complexity.

Since the cluster basis has to be able to handle \emph{all}
blocks connected to a given cluster, we have to approximate the
entire farfield.
We let
\begin{align*}
  F_t &:= \{ j\in\Jdx\ :\ \supp\psi_j\subseteq {\mathcal F}_t \} &
  &\text{ for all } t\in\ctI
\end{align*}
and use our algorithm to find low-rank interpolation operators
${\mathfrak I}_t = V_t P_t$ such that
\begin{equation*}
  G|_{\hat t\times F_t}
  \approx {\mathfrak I}_t G|_{\hat t\times F_t}
  = V_t P_t G|_{\hat t\times F_t}.
\end{equation*}
Since our construction of ${\mathfrak I}_t$ depends only on the
matrix $A_t$, this approach does not increase the computational
work.

In order to obtain a \emph{uniform} hierarchical matrix, we
also require an approximation of the column clusters.
Our algorithm can easily handle this task as well:
as before, we let
\begin{align*}
  F_s &:= \{ i\in\Idx\ :\ \supp\varphi_i\subseteq {\mathcal F}_s \} &
  &\text{ for all } s\in\ctJ
\end{align*}
and use our algorithm with the \emph{adjoint} matrix to find
low-rank interpolation operators ${\mathfrak I}_s = V_s P_s$ such
that
\begin{equation*}
  G|_{F_t\times\hat s}^*
  \approx {\mathfrak I}_s G|_{F_t\times\hat s}^*
  = V_s P_s G|_{F_t\times\hat s}^*.
\end{equation*}
If a block $b=(t,s)$ satisfies the admissibility condition
\begin{align}\label{eq:admiss_grh_h2}
  \nonumber (t,s) \text{ admissible}
  &\iff ({\mathcal B}_s \subseteq {\mathcal F}_t \wedge
         {\mathcal B}_t \subseteq {\mathcal F}_s)\\
  &\iff \max\{\diam_\infty({\mathcal B}_t),
              \diam_\infty({\mathcal B}_s) \}
        \leq \dist_\infty({\mathcal B}_t,{\mathcal B}_s),
\end{align}
we have $\hat s\subseteq F_t$ and $\hat t\subseteq F_s$ and
therefore obtain
\begin{equation}
  \label{eqn:green_h2}
  G|_{\hat t\times\hat s}
  \approx {\mathfrak I}_t G|_{\hat t\times\hat s}
  \approx {\mathfrak I}_t G|_{\hat t\times\hat s} {\mathfrak I}_s^*
  = V_t P_t G|_{\hat t\times\hat s} P_s^* V_s^*
  = V_t S_b V_s^*
\end{equation}
with $S_b := P_t G|_{\hat t\times\hat s} P_s^*$.
We have found a way to construct a uniform hierarchical matrix.
Note that we can compute $S_b$ by evaluating $G$ in the small
number of pivot elements chosen for the row and column clusters.

In order to obtain an ${\mathcal H}^2$-matrix, the cluster bases have
to be nested.
Similar to the procedure outlined in \cite{BEVE12}, we only have
to ensure that the pivot elements for clusters with sons are
chosen among the sons' pivot elements.

For the sake of simplicity, we consider only the case of a
binary cluster tree and construct the cluster basis recursively.
Let $t\in\ctI$.
If $t$ is a leaf, i.e., if $\sons(t)=\emptyset$, we use our algorithm
as before and obtain ${\mathfrak I}_t = V_t P_t$.

If $t$ is not a leaf, we have $\sons(t)=\{t_1,t_2\}$.
We assume that we have already found ${\mathfrak I}_{t_1} = V_{t_1} P_{t_1}$
and ${\mathfrak I}_{t_2} = V_{t_2} P_{t_2}$ with
\begin{align*}
  G|_{t_1\times F_{t_1}}
  &\approx {\mathfrak I}_{t_1} G|_{t_1\times F_{t_1}}, &
  G|_{t_2\times F_{t_2}}
  &\approx {\mathfrak I}_{t_2} G|_{t_1\times F_{t_2}}
\end{align*}
by recursion.
Due to $F_t \subseteq F_{t_1} \cap F_{t_2}$, we have
\begin{align*}
  G|_{\hat t\times F_t}
  &= \begin{pmatrix}
       G|_{\hat t_1\times F_t}\\
       G|_{\hat t_2\times F_t}
     \end{pmatrix}
   \approx \begin{pmatrix}
       {\mathfrak I}_{t_1} G|_{\hat t_1\times F_t}\\
       {\mathfrak I}_{t_2} G|_{\hat t_2\times F_t}
     \end{pmatrix}
   = \begin{pmatrix}
       V_{t_1} P_{t_1} G|_{\hat t_1\times F_t}\\
       V_{t_2} P_{t_2} G|_{\hat t_2\times F_t}
     \end{pmatrix}\\
  &= \begin{pmatrix}
       V_{t_1} & \\
       & V_{t_2}
     \end{pmatrix}
     \begin{pmatrix}
       P_{t_1} G|_{\hat t_1\times F_t}\\
       P_{t_2} G|_{\hat t_2\times F_t}
     \end{pmatrix}
   = \begin{pmatrix}
       V_{t_1} & \\
       & V_{t_2}
     \end{pmatrix}
     \begin{pmatrix}
       P_{t_1} & \\
       & P_{t_2}
     \end{pmatrix} G|_{\hat t\times F_t}\\
  &\approx \begin{pmatrix}
       V_{t_1} & \\
       & V_{t_2}
     \end{pmatrix}
     \begin{pmatrix}
       P_{t_1} & \\
       & P_{t_2}
     \end{pmatrix} A_t B_{tF_t}^{*},
\end{align*}
where $A_t$ and $B_{tF_t}$ are again the matrices of the quadrature
method.
By applying the cross approximation algorithm to the two middle
factors, we find $\widehat{\mathfrak I}_t = \widehat V_t \widehat P_t$
such that
\begin{equation*}
  \begin{pmatrix}
    P_{t_1} & \\
    & P_{t_2}
  \end{pmatrix} A_t
  \approx \widehat V_t \widehat P_t
  \begin{pmatrix}
    P_{t_1} & \\
    & P_{t_2}
  \end{pmatrix} A_t.
\end{equation*}
All we have to do is to let
\begin{align*}
  V_t &:= \begin{pmatrix}
    V_{t_1} & \\
    & V_{t_2}
  \end{pmatrix} \widehat V_t, &
  P_t &:= \widehat P_t \begin{pmatrix}
    P_{t_1} & \\
    & P_{t_2}
  \end{pmatrix}
\end{align*}
and observe
\begin{align*}
  G|_{\hat t\times F_t}
  &\approx \begin{pmatrix}
       V_{t_1} & \\
       & V_{t_2}
     \end{pmatrix}
     \begin{pmatrix}
       P_{t_1} & \\
       & P_{t_2}
     \end{pmatrix} A_t B_{tF_t}^{*}\\
  &\approx \begin{pmatrix}
       V_{t_1} & \\
       & V_{t_2}
     \end{pmatrix}
     \widehat V_t \widehat P_t
     \begin{pmatrix}
       P_{t_1} & \\
       & P_{t_2}
     \end{pmatrix} A_t B_{tF_t}^{*}
   = V_t P_t A_t B_{tF_t}^{*}\\
  &\approx V_t P_t G|_{\hat t\times F_t}
   = {\mathfrak I}_t G|_{\hat t\times F_t}.
\end{align*}
Splitting $\widehat V_t$ into an upper and a lower part matching
$V_{t_1}$ and $V_{t_2}$ yields
\begin{align*}
  \begin{pmatrix}
    E_{t_1}\\
    E_{t_2}
  \end{pmatrix}
  &:= \widehat V_t, &
  V_t
  &= \begin{pmatrix}
       V_{t_1} & \\
       & V_{t_2}
     \end{pmatrix} \widehat V_t
   = \begin{pmatrix}
       V_{t_1} & \\
       & V_{t_2}
     \end{pmatrix}
     \begin{pmatrix}
       E_{t_1}\\ E_{t_2}
     \end{pmatrix}
   = \begin{pmatrix}
       V_{t_1} E_{t_1}\\
       V_{t_2} E_{t_2}
     \end{pmatrix},
\end{align*}
so we have indeed found a \emph{nested} cluster basis.

%
%
\begin{remark}[Complexity]
We assume that the ranks obtained by the cross approximation are
bounded by $\ell$.

For a leaf cluster, the construction of $V_t$ and $P_t$ requires
${\mathcal O}(\ell k \#\hat t)$ operations.
For a non-leaf cluster, the cross approximation is applied
to a $\min\{\#\hat t,2\ell\}\times(2k)$-matrix and takes no more than
${\mathcal O}(\ell k \min\{\#\hat t,2\ell\})$ operations.
Similar to \cite[Remark~4.1]{BOHA02}, we conclude that
${\mathcal O}(\ell k n)$ operations are sufficient to set up the
cluster bases.

The coupling matrices require us to solve two linear systems by
forward substitution, one with the matrix $P_t C_t$ and one with
the matrix $P_s C_s$.
The first is of dimension $\min\{\ell,\#\hat t\}$, the second of
dimension $\min\{\ell,\#\hat s\}$, therefore solving both systems
takes not more than
${\mathcal O}(\min\{\ell,\#\hat t\}^2 \ell
+\min\{\ell,\#\hat s\}^2 \ell)$ operations for
one block $b=(t,s)\in\lfaIJ$.
As in \cite[Remark~4.1]{BOHA02}, we conclude that not more than
${\mathcal O}(\ell^2 n)$ operations are required to compute all
coupling matrices and that the coupling matrices require not
more than ${\mathcal O}(\ell n)$ units of storage.
\end{remark}

Since the recursive algorithm applies multiple approximations to
the same block, we have to take a closer look at error estimates.
The total error in a given cluster is influenced by the errors
introduced in its sons, their sons, and so on.
In order to handle these connections, we introduce the set of
\emph{descendents} of a cluster $t\in\ctI$ by
\begin{align*}
  \sons^*(t) &:= \begin{cases}
    \{t\} &\text{ if } \sons(t)=\emptyset,\\
    \{t\} \cup \sons^*(t_1) \cup \sons^*(t_2)
    &\text{ if } \sons(t)=\{t_1,t_2\}.
  \end{cases}
\end{align*}
Each step of the algorithm introduces an error for the current
cluster:
for leaves, we approximate $G|_{\hat t\times F_t}$, while for non-leaves
the restriction of $G|_{\hat t\times F_t}$ to the sons' pivot elements
is approximated.
We denote the error added by quadrature and cross approximation
in each cluster $t\in\ctI$ by
\begin{align*}
  \hat\epsilon_t &:= \begin{cases}
    \|G|_{\hat t\times F_t} - {\mathfrak I}_t G|_{\hat t\times F_t}\|_2
    &\text{ if } \sons(t)=\emptyset,\\
    \left\| \begin{pmatrix}
      P_{t_1} G|_{\hat t_1\times F_t}\\
      P_{t_2} G|_{\hat t_2\times F_t}
    \end{pmatrix}
    - \widehat{\mathfrak I}_t
    \begin{pmatrix}
      P_{t_1} G|_{\hat t_1\times F_t}\\
      P_{t_2} G|_{\hat t_2\times F_t}
    \end{pmatrix} \right\|_2
    &\text{ if } \sons(t)=\{t_1,t_2\}.
  \end{cases}
\end{align*}
We have already seen that we can control these ``local'' errors
by choosing the quadrature order and the error tolerance of the
cross approximation appropriately.
Combining these estimates with stability estimates yields the
following bound for the global error.

%
%
\begin{lemma}[Error estimate]
Using the stability constants
\begin{align*}
  \widehat\Lambda_t
  &:= \begin{cases}
    1 &\text{ if } \sons(t)=\emptyset,\\
    \max\{\|V_{t_1}\|_2, \|V_{t_2}\|_2 \}
    &\text{ if } \sons(t)=\{t_1,t_2\}
  \end{cases} &
  &\text{ for all } t\in\ctI,
\end{align*}
the total approximation error can be bounded by
\begin{align}\label{eq:recursion_1}
  \| G|_{\hat t\times F_t} - {\mathfrak I}_t G|_{\hat t\times F_t} \|_2
  &\leq \sum_{r\in\sons^*(t)} \widehat\Lambda_t \hat\epsilon_t &
  &\text{ for all } t\in\ctI.
\end{align}
\end{lemma}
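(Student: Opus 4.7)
The plan is to proceed by structural induction over the cluster tree, working from the leaves toward the root, exploiting the fact that $\sons^*(t)=\{t\}\cup\sons^*(t_1)\cup\sons^*(t_2)$ in the non-leaf case.

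For the base case, if $t$ is a leaf, then $\sons^*(t)=\{t\}$, $\widehat\Lambda_t=1$, and the definition of $\hat\epsilon_t$ makes the inequality an equality. The inductive step is where the actual work happens. Given $\sons(t)=\{t_1,t_2\}$, I would exploit the factorisation
\begin{equation*}
{\mathfrak I}_t
= \begin{pmatrix} V_{t_1} & \\ & V_{t_2} \end{pmatrix}
  \widehat{\mathfrak I}_t
  \begin{pmatrix} P_{t_1} & \\ & P_{t_2} \end{pmatrix}
\end{equation*}
together with the block decomposition of $G|_{\hat t\times F_t}$ obtained by splitting its rows according to $\hat t=\hat t_1\cup\hat t_2$. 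Writing $M_t$ for the matrix appearing in the definition of $\hat\epsilon_t$ for non-leaves, I would insert the intermediate quantity $\mathrm{diag}(V_{t_1},V_{t_2})\,M_t$ and apply the triangle inequality to obtain
\begin{equation*}
\|G|_{\hat t\times F_t}-{\mathfrak I}_t G|_{\hat t\times F_t}\|_2
\leq \left\|\begin{pmatrix}
  G|_{\hat t_1\times F_t}-V_{t_1}P_{t_1}G|_{\hat t_1\times F_t}\\
  G|_{\hat t_2\times F_t}-V_{t_2}P_{t_2}G|_{\hat t_2\times F_t}
\end{pmatrix}\right\|_2
+\left\|\begin{pmatrix} V_{t_1} & \\ & V_{t_2}\end{pmatrix}\right\|_2
\|M_t-\widehat{\mathfrak I}_t M_t\|_2.
\end{equation*}
The norm of the block-diagonal matrix equals $\max\{\|V_{t_1}\|_2,\|V_{t_2}\|_2\}=\widehat\Lambda_t$, and the second factor on the right is exactly $\hat\epsilon_t$, so the second summand contributes $\widehat\Lambda_t\hat\epsilon_t$, which matches the $r=t$ term in the desired sum.

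For the first summand, I would bound the stacked norm by the sum of the two son norms (using $\|(A^\top,B^\top)^\top\|_2\le\|A\|_2+\|B\|_2$) and then handle the mismatch between the column index sets: since the admissibility construction gives $F_t\subseteq F_{t_1}\cap F_{t_2}$, each matrix $G|_{\hat t_i\times F_t}-V_{t_i}P_{t_i}G|_{\hat t_i\times F_t}$ is a column restriction of $G|_{\hat t_i\times F_{t_i}}-{\mathfrak I}_{t_i}G|_{\hat t_i\times F_{t_i}}$, so its spectral norm is no larger. The inductive hypothesis applied at $t_1$ and $t_2$ then bounds each of these by $\sum_{r\in\sons^*(t_i)}\widehat\Lambda_r\hat\epsilon_r$, and combining with the $r=t$ contribution and $\sons^*(t)=\{t\}\cup\sons^*(t_1)\cup\sons^*(t_2)$ yields the claim.

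The main obstacle, in my view, is purely bookkeeping: keeping the row/column restrictions, the pivoting projections, and the block-diagonal embeddings straight while showing that the block-structured error decomposes additively into a current-level term $\widehat\Lambda_t\hat\epsilon_t$ plus son-level terms. The monotonicity step $F_t\subseteq F_{t_i}$ is essential because without it the inductive hypothesis would not apply directly; once this observation is in place, the recursion is mechanical.
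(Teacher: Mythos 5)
Your proposal is correct and follows essentially the same structural-induction argument as the paper: same factorization of ${\mathfrak I}_t$, same intermediate quantity inserted before the triangle inequality, same identification of the block-diagonal $V$-norm with $\widehat\Lambda_t$ and of the residual with $\hat\epsilon_t$, and same recursive recombination via $\sons^*(t)=\{t\}\cup\sons^*(t_1)\cup\sons^*(t_2)$. If anything, you make the column-restriction step (passing from $F_t$ to $F_{t_i}$ via $F_t\subseteq F_{t_1}\cap F_{t_2}$, which lets the spectral norm only decrease) more explicit than the paper, which glosses over it with a slight notational slip mixing $F_t$ and $F_{t_i}$ in the same difference.
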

\begin{proof}
By structural induction.

Let $t\in\ctI$ be a leaf of the cluster tree.
Then we have $\sons^*(t)=\{t\}$ and (\ref{eq:recursion_1}) holds by
definition.

Let now $t\in\ctI$ be a cluster with $\sons(t)=\{t_1,t_2\}$ and assume
that (\ref{eq:recursion_1}) holds for $t_1$ and $t_2$.
We have
\begin{align*}
  \| G|_{\hat t\times F_t} - {\mathfrak I}_t G|_{\hat t\times F_t} \|_2
  &= \left\| \begin{pmatrix}
       G|_{\hat t_1\times F_t}\\
       G|_{\hat t_2\times F_t}
     \end{pmatrix}
     - \begin{pmatrix}
       V_{t_1} & \\
       & V_{t_2}
     \end{pmatrix}
     \widehat{\mathfrak I}_t
     \begin{pmatrix}
       P_{t_1} & \\
       & P_{t_2}
     \end{pmatrix}
     \begin{pmatrix}
       G|_{\hat t_1\times F_t}\\
       G|_{\hat t_2\times F_t}
     \end{pmatrix} \right\|_2\\
  &= \left\| \begin{pmatrix}
       G|_{\hat t_1\times F_t}\\
       G|_{\hat t_2\times F_t}
     \end{pmatrix}
     - \begin{pmatrix}
       V_{t_1} & \\
       & V_{t_2}
     \end{pmatrix}
     \begin{pmatrix}
       P_{t_1} G|_{\hat t_1\times F_t}\\
       P_{t_2} G|_{\hat t_2\times F_t}
     \end{pmatrix}\right.\\
  &\left. + \begin{pmatrix}
       V_{t_1} & \\
       & V_{t_2}
     \end{pmatrix}
     \begin{pmatrix}
       P_{t_1} G|_{\hat t_1\times F_t}\\
       P_{t_2} G|_{\hat t_2\times F_t}
     \end{pmatrix}
     - \begin{pmatrix}
       V_{t_1} & \\
       & V_{t_2}
     \end{pmatrix}
     \widehat{\mathfrak I}_t
     \begin{pmatrix}
       P_{t_1} G|_{\hat t_1\times F_t}\\
       P_{t_2} G|_{\hat t_2\times F_t}
     \end{pmatrix} \right\|_2\\
  &\leq \left\| \begin{pmatrix}
       G|_{\hat t_1\times F_t} - {\mathfrak I}_{t_1} G|_{\hat t_1\times F_t}\\
       G|_{\hat t_2\times F_t} - {\mathfrak I}_{t_2} G|_{\hat t_2\times F_t}
     \end{pmatrix} \right\|_2\\
  &+ \left\| \begin{pmatrix}
       V_{t_1} & \\
       & V_{t_2}
     \end{pmatrix} \right\|_2
     \left\| \begin{pmatrix}
       P_{t_1} G|_{\hat t_1\times F_t}\\
       P_{t_2} G|_{\hat t_2\times F_t}
     \end{pmatrix}
     - \widehat{\mathfrak I}_t
     \begin{pmatrix}
       P_{t_1} G|_{\hat t_1\times F_t}\\
       P_{t_2} G|_{\hat t_2\times F_t}
     \end{pmatrix} \right\|_2\\
  &\leq \| G|_{\hat t_1\times F_t}
           - {\mathfrak I}_{t_1} G|_{\hat t_1\times F_{t_1}} \|_2\\
  &\qquad + \| G|_{\hat t_2\times F_t}
           - {\mathfrak I}_{t_2} G|_{\hat t_2\times F_{t_2}} \|_2
     + \widehat\Lambda_t \hat\epsilon_t.
\end{align*}
The induction assumption yields
\begin{equation*}
  \|G|_{\hat t\times F_t}
    - {\mathfrak I}_t G|_{\hat t\times F_t}\|_2
  \leq \sum_{r\in\sons^*(t_1)} \widehat\Lambda_r \hat\epsilon_r
       + \sum_{r\in\sons^*(t_2)} \widehat\Lambda_r \hat\epsilon_r
       + \widehat\Lambda_t \hat\epsilon_t
  = \sum_{r\in\sons^*(t)} \widehat\Lambda_t \hat\epsilon_t,
\end{equation*}
and the induction is complete.
\qed
\end{proof}

We can find upper bounds for the stability constants during
the course of the algorithm:
let $t\in\ctI$.
If $\sons(t)=\emptyset$, the matrix $V_t$ can be constructed
explicitly by our algorithm, so we can either find $\|V_t\|_2$
by computing a singular value decomposition or obtain a good
estimate by using a power iteration.

If $\sons(t)=\{t_1,t_2\}$, we can use
\begin{equation}\label{eq:recursion_Vt}
  \|V_t\|_2
  = \left\| \begin{pmatrix}
      V_{t_1} & \\
      & V_{t_2}
    \end{pmatrix} \widehat V_t \right\|_2
  \leq \left\| \begin{pmatrix}
      V_{t_1} & \\
      & V_{t_2}
    \end{pmatrix} \right\|_2
    \| \widehat V_t \|_2
  = \max\{ \|V_{t_1}\|_2, \|V_{t_2}\|_2 \} \|\widehat V_t\|_2
\end{equation}
to compute an estimate of $\|V_t\|_2$ based on an estimate of
the small matrix $\widehat V_t$ that can be treated as before.

This approach allows us to obtain estimates for $\widehat\Lambda_t$
that can be computed explicitly during the course of the algorithm
and used to verify that (\ref{eq:recursion_1}) is bounded.

An alternative approach can be based on the conjecture
\cite[eq. (16)]{BEVE12}:
let $\ell\in\bbbn$ denote an upper bound for the rank used in the
cross approximation algorithms.
If we assume that there is a constant $\Lambda_V\geq 1$ such that
\begin{align*}
  \| V_t \| &\leq \Lambda_V \ell &
  &\text{ for all } t\in\lfI,\\
  \| \widehat V_t \|_2 &\leq \Lambda_V \ell &
  &\text{ for all } t\in\ctI\setminus\lfI,
\end{align*}
a simple induction using the inequality (\ref{eq:recursion_Vt})
immediately yields
\begin{align*}
  \| V_t \|_2 &\leq \Lambda_V^p \ell^p &
  &\text{ for all } t\in\ctI,
\end{align*}
where $p\in\bbbn$ denotes the depth of the cluster tree $\ctI$.
Since $\|V_t\|_2$ grows only polynomially with $\ell$ while the
error $\hat\epsilon_t$ converges exponentially, the right-hand
side of the error estimate (\ref{eq:recursion_1}) will also
converge exponentially.

\section{Numerical experiments}

The theoretical properties of the new approximation method, which we
will call \emph{Green hybrid method (GrH)} in the following, have been
discussed in detail in the preceding sections.
We will now investigate how the new method performs in experiments.

We consider the direct boundary element formulation of the Dirichlet
problem:
let $f$ be a harmonic function in $\Omega$ and assume that its
Dirichlet values $f|_{\partial\Omega}$ are given.
Solving the integral equation
\begin{align*}
  \int_{\partial\Omega} g(x,y) \frac{\partial f}{\partial n}(y) \,dy
  &= \frac{1}{2} f(x)
     + \int_{\partial\Omega} \frac{\partial g}{\partial n(y)}(x,y) f(y) \,dy &
  &\text{ for almost all } x\in\partial\Omega
\end{align*}
yields the Neumann values $\frac{\partial f}{\partial n}|_{\partial\omega}$.
We set up the Galerkin matrices $V$ and $K$ for the single and double
layer potential operators as well as the mass-matrix $M$ and solve the
equation
\begin{equation*}
 V \alpha = \left(K + \frac{1}{2} M \right) \beta,
\end{equation*}
where $\beta$ are the coefficients of the $L^2$-projection for the given
Dirichlet data in the piecewise linear basis $(\psi_j)_{j\in\Jdx}$ and
$\alpha$ are the coefficients for the desired Neumann data in the piecewise
constant basis $(\varphi_i)_{i\in\Idx}$.

For testing purpose we use the following three harmonic functions:
\begin{equation*}
 f_1(x) = x_1^2 - x_3^2, \quad f_2(x) = g(x, (1.2, 1.2, 1.2)), \quad
 f_3(x) = g(x, (1.0, 0.25, 1.0)) .
\end{equation*}
The approximation quality is measured by the absolute $L^2$-error of
the Neumann data
\begin{equation*}
 \epsilon_j
  = \left( \int_{\partial \Omega} \left( \frac{\partial}{\partial n} f_j(x)
   - \sum_{i \in \Idx} \alpha_i \varphi_i(x) \right)^2 \mathrm d x \right)^{1/2}.
\end{equation*}
The parameters for the Green hybrid method and for the adaptive cross
approximation have been chosen manually to ensure that the total error
(resulting from quadrature, matrix compression, and discretization) is
close to the discretization error for all three harmonic functions.

Nearfield entries are computed using Sauter's quadrature rule
\cite{SA96,SASC04} with 3 Gauss points per dimension for regular
integrals and 5 Gauss points for singular integrals.

All computations are performed on a single AMD Opteron 8431 Core at 2.4GHz.
In all numerical experiments we compare our new approach with the well-known
adaptive cross approximation technique (ACA) \cite[Algorithm~4.2]{BERJ01}.

%
%
\begin{figure}[Ht!]
 \begin{center}
  \subfigure[Memory per degree of freedom]{\label{fig:greenhybrid_sub_mem}
            \includegraphics[width=0.47\textwidth]{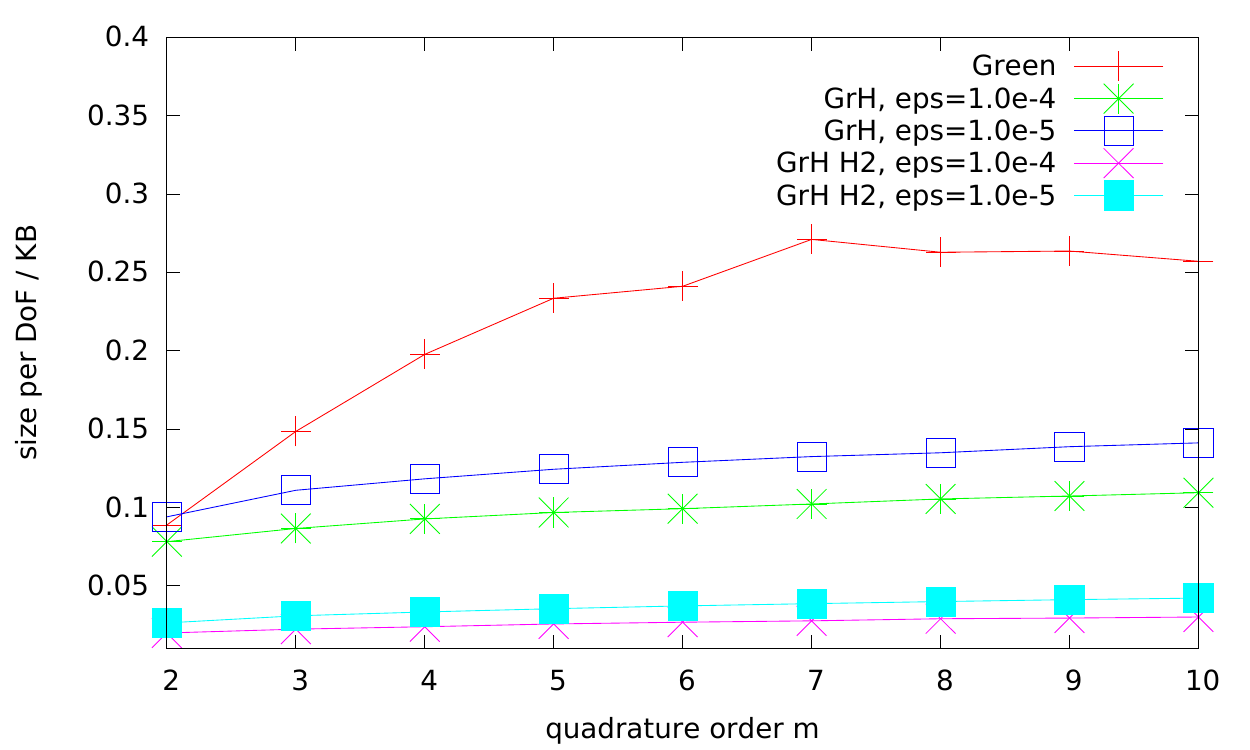}
        }
  \subfigure[Relative error]{\label{fig:greenhybrid_sub_error}
            \includegraphics[width=0.47\textwidth]{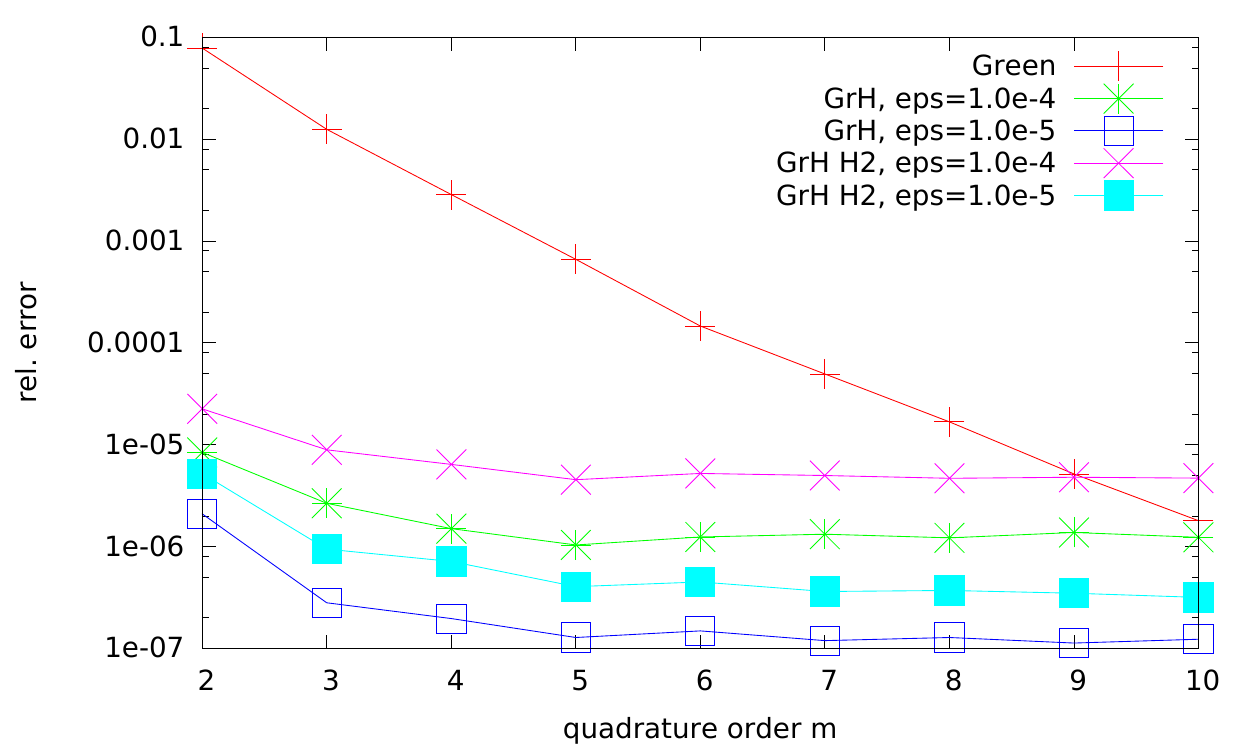}
        }
 \end{center}
  \subfigure[time per degree of freedom]{\label{fig:greenhybrid_sub_time}
            \includegraphics[width=0.47\textwidth]{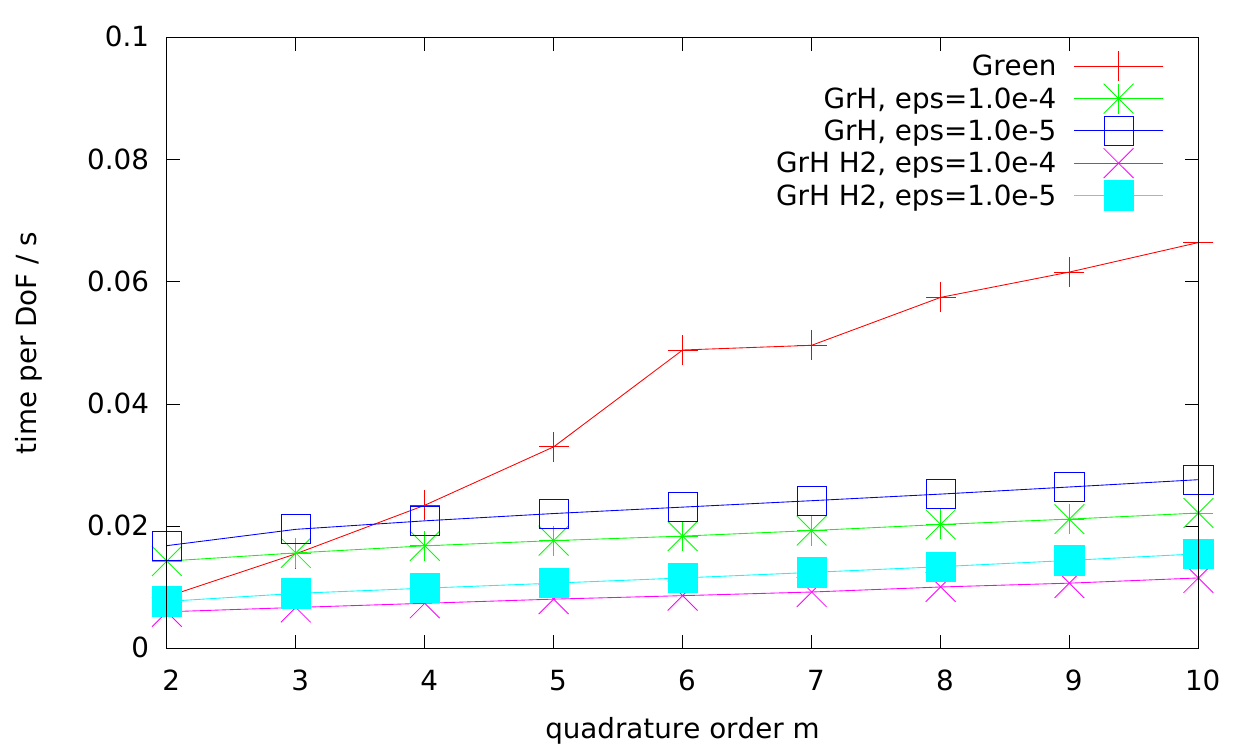}
        }
 \caption{Unit sphere, 32768 triangles:
       memory consumption, relative error and setup time for SLP with
       Green method vs. Green hybrid method vs. Green hybrid
       method $\mathcal H^2$.
       In all cases $\delta_t = \diam_\infty({\mathcal B}_t)/2$ was chosen.}
 \label{img:cmp_green_greenhybrid}
\end{figure}

\paragraph*{Reduced ranks.}\quad
The pure quadrature approximation \eqref{eq:green_ab} produces matrices
with local rank of $2k = 12m^2$. This implies that for $m=10$ the local rank
already reaches a value of 1200, leading to a rather unattractive
compression rate.
As we stated in the beginning of chapter \ref{cha:hybrid_approximation},
a rank reduction from $2k$ down to $k$ is at least expected when applying
cross approximation to the quadrature approach from 
\eqref{eq:green_ab} due to the linear dependency of Neumann and Dirichlet
values.
Figure~\ref{img:cmp_green_greenhybrid} shows that the Green hybrid method
\eqref{eq:hybrid} performs even better:
the storage requirements, given in Figure~\ref{fig:greenhybrid_sub_mem},
are reduced by approximately 50\%, and the $\mathcal{H}^2$-matrix version
\eqref{eqn:green_h2} reduces the storage requirements by approximately 75\%
compared to the $\mathcal{H}$-matrix version..

Figure~\ref{fig:greenhybrid_sub_error} illustrates that the pure Green
quadrature approach \eqref{eq:green_ab} leads to exponential convergence,
as predicted by Theorem~\ref{th:quadrature}.
The hybrid methods reach a surprisingly high accuracy even for relatively
low quadrature orders.
We assume that this is due to the algebraic interpolation
(\ref{eq:hybrid}) exactly reproducing the original matrix blocks as soon
as their rank is reached.

Figure~\ref{fig:greenhybrid_sub_time} illustrates that the hybrid method
is significantly faster than the pure quadrature method:
at comparable accuracies, the hybrid method saves more than 50\% of the
computation time, and the $\mathcal{H}^2$-matrix version saves approximately
50\% compare to the $\mathcal{H}$-matrix version.

%
%
\begin{figure}[Ht!]
 \begin{center}
  \subfigure[Memory per degree of freedom]{
            \includegraphics[width=0.47\textwidth]{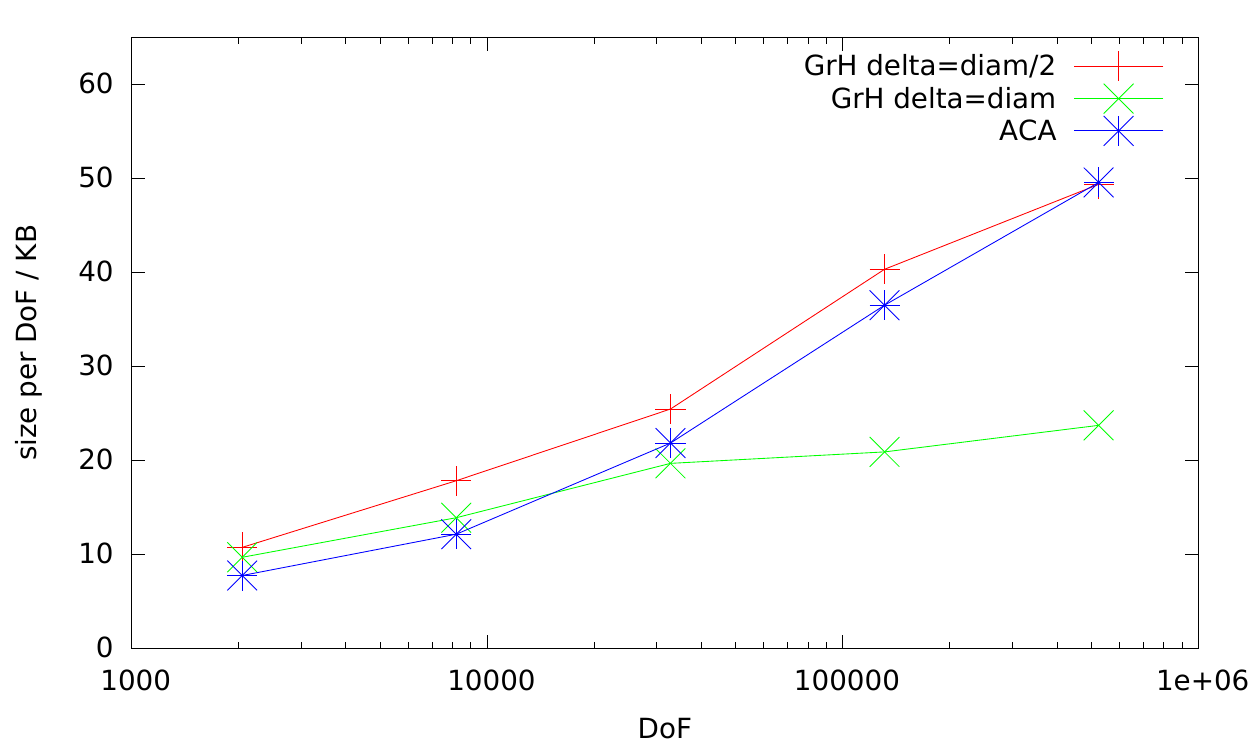}
        }
  \subfigure[Time per degree of freedom]{
            \includegraphics[width=0.47\textwidth]{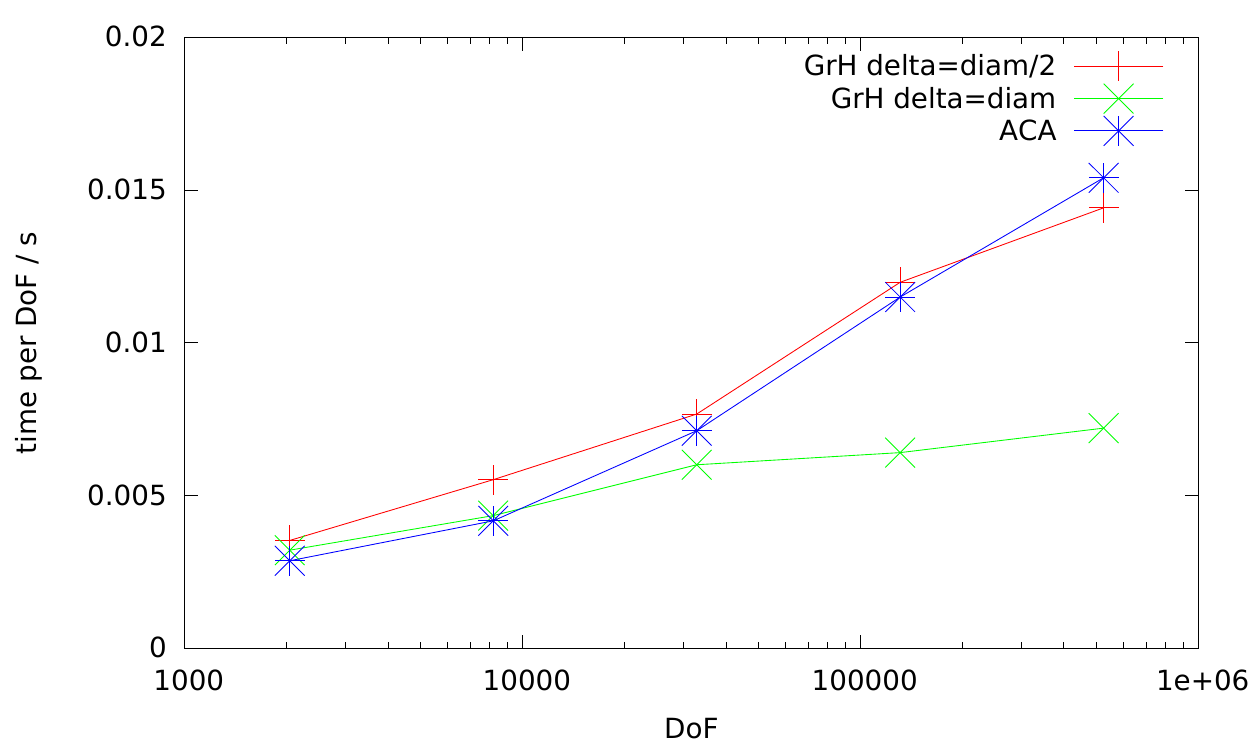}
        }
 \end{center}
 \caption{Unit sphere: memory consumption and setup time for SLP with the
          Green hybrid method,
          $\delta_t = \diam_\infty({\mathcal B}_t)/2$ and
          $\delta_t = \diam_\infty({\mathcal B}_t)$ compared to ACA.}
 \label{img:sphere_eta1_delta_05_10}
\end{figure}

\paragraph*{Choice of $\delta_t$.}\quad
In a second example, we apply the Green hybrid method to different
surface meshes on the unit sphere with the admissibility condition
(\ref{eq:admiss_grh_h2}) and the value
$\delta_t = \diam_\infty({\mathcal B}_t)/2$ used in the theoretical
investigation as well as the value
$\delta_t = \diam_\infty({\mathcal B}_t)$.
The latter is not covered by our theory, but
Figure~\ref{img:sphere_eta1_delta_05_10} shows that it is superior
both with regards to computational work and memory consumption.

Figure~\ref{img:sphere_eta1_delta_05_10} also shows the memory
and time requirements of the standard ACA technique.
We can see that the new method with $\delta_t = \diam_\infty(\mathcal{B}_t)$
offers significant advantages in both respects.

\begin{figure}[Ht!]
 \begin{center}
  \subfigure[Memory per degree of freedom]{
            \includegraphics[width=0.47\textwidth]{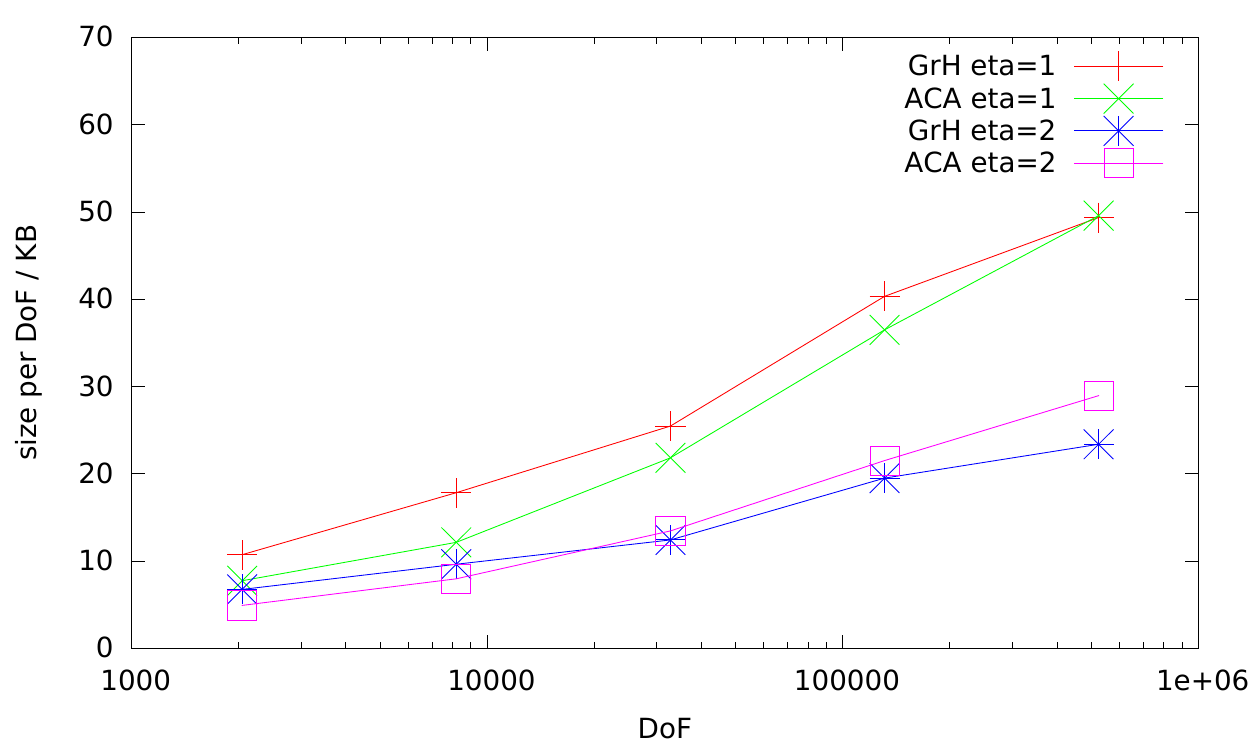}
        }
  \subfigure[Time per degree of freedom]{
            \includegraphics[width=0.47\textwidth]{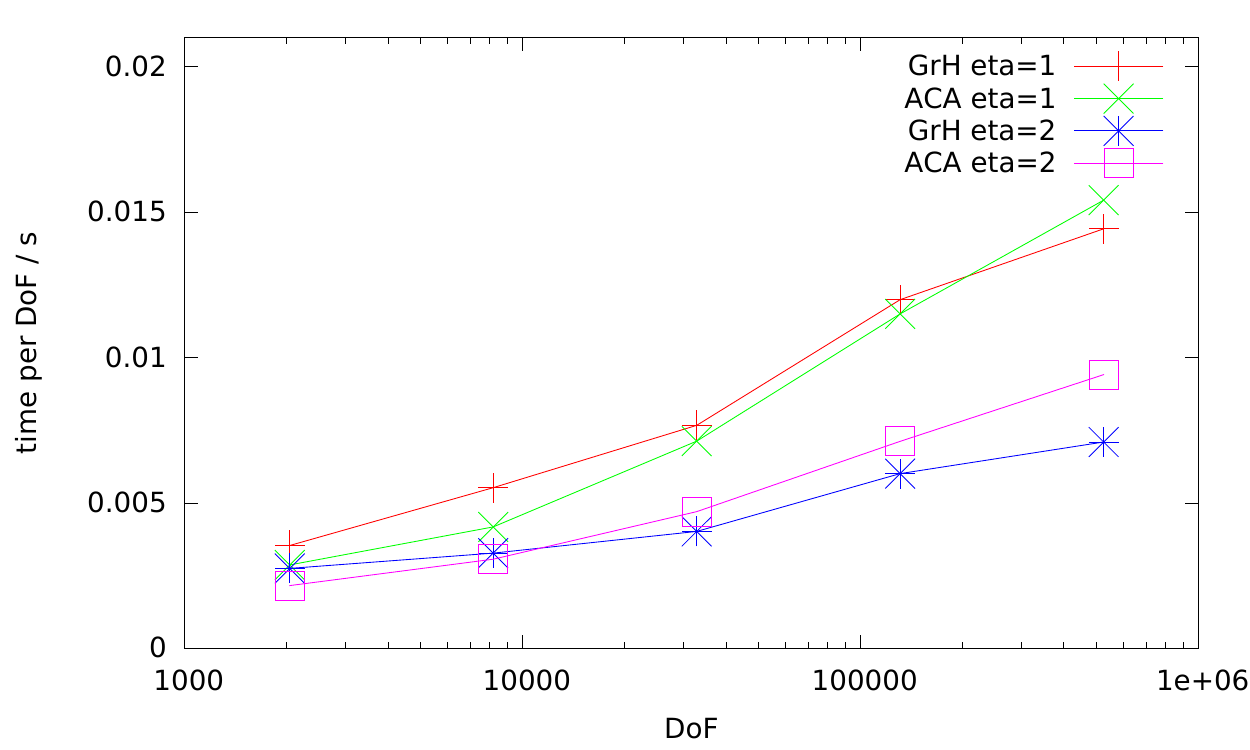}
        }
 \end{center}
 \caption{Unit sphere: memory consumption and setup time for SLP with the
          Green hybrid method and adaptive cross approximation.
          Results are shown with $\delta_t = \diam_{\infty}
          \left( \mathcal B_t \right) / 2$ for $\eta = 1$ and for $\eta = 2$.}
 \label{img:sphere_eta1_2}
\end{figure}

\paragraph*{Weaker admissibility condition.}\quad
Another useful modification that is not covered by our theory is the
construction of the block tree based on the admissibility condition
\begin{equation}
 \max \{ \diam_\infty({\mathcal B}_t), \diam_\infty({\mathcal B}_s) \}
 \leq \eta \dist_\infty({\mathcal B}_t, {\mathcal B}_s).
 \label{eq:weak_admiss}
\end{equation}
If we choose $\eta>1$, this condition is weaker than the condition
(\ref{eq:admiss_grh_h2}) used in the theoretical investigation.

Figure~\ref{img:sphere_eta1_2} shows experimental results for the
choices $\eta=1$ and $\eta=2$.
We can see that both ACA and the new Green hybrid method profit
from the weaker admissibility condition.

\paragraph*{Linear scaling.}\quad
In order to demonstrate the linear complexity of the Green hybrid method,
we have also computed the single layer potential for the same number of
degrees of freedom as before but using the same $m$ and $\epsilon_{ACA}$
for all resolutions of the sphere.
The results can be seen in Figure~\ref{img:linearscale}.

We can see that both the time and the storage requirements of the
new method indeed scale linearly with $n$.
Since the standard ACA algorithm \cite[Algorithm~4.2]{BERJ01} constructs
an $\mathcal{H}$-matrix instead of an $\mathcal{H}^2$-matrix, we observe
the expected $\mathcal{O}(n \log n)$ complexity.

\begin{figure}[Ht!]
 \begin{center}
  \subfigure[Memory per degree of freedom]{
            \includegraphics[width=0.47\textwidth]{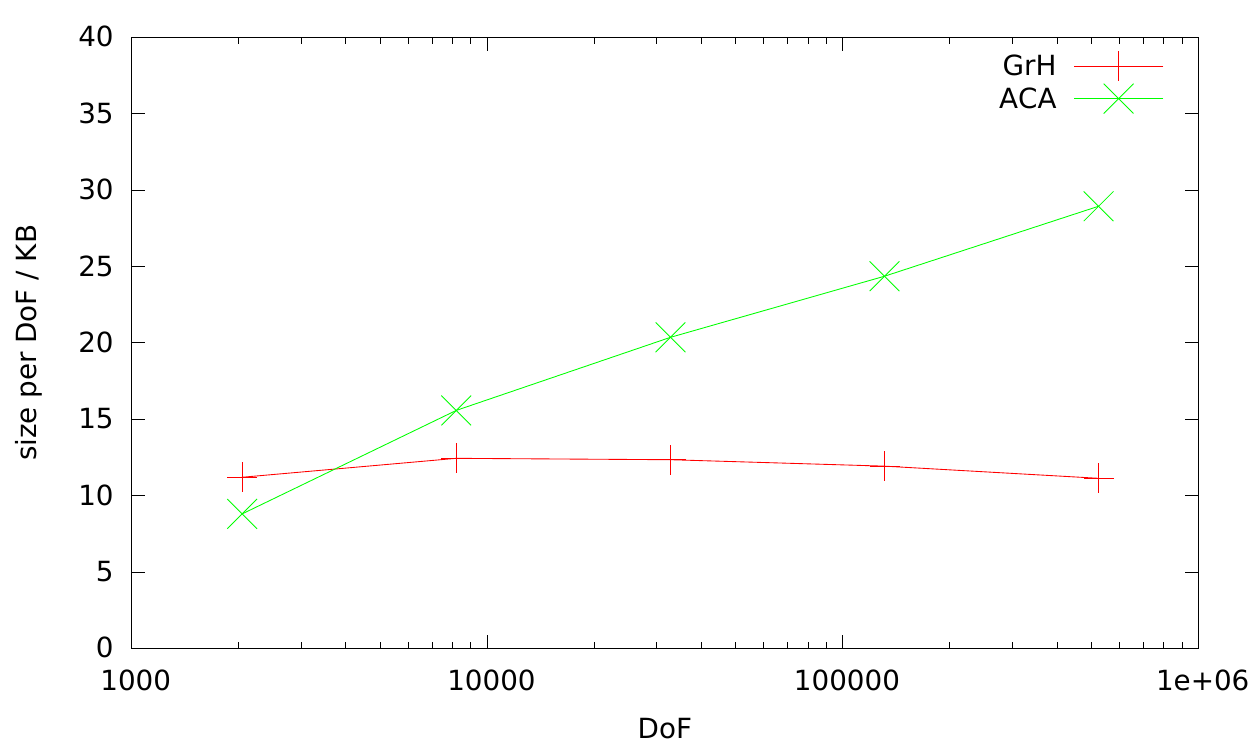}
        }
  \subfigure[Time per degree of freedom]{
            \includegraphics[width=0.47\textwidth]{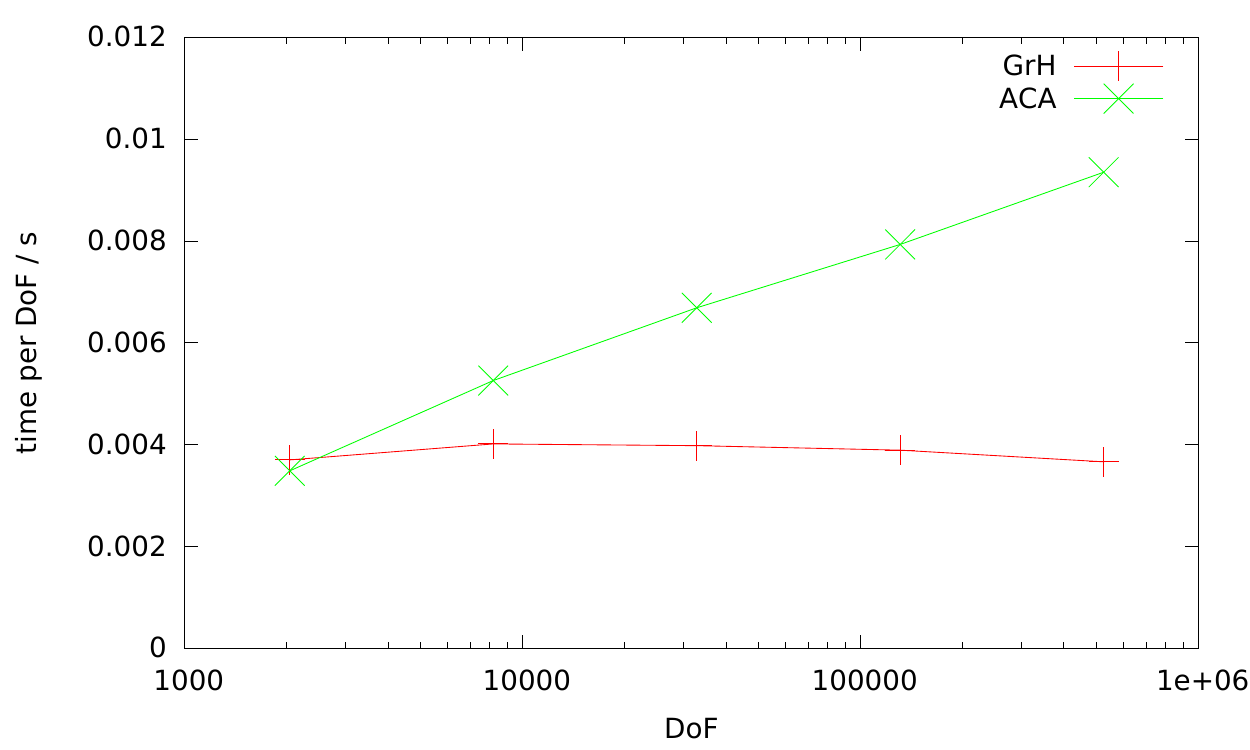}
        }
 \end{center}
 \caption{Unit sphere: memory consumption and setup time for SLP with fixed quadrature
          ranks, cross approximation error tolerances, minimal leafsizes, 
          $\eta = 2$ and $\delta_t = \diam_\infty({\mathcal B}_t)$. }
 \label{img:linearscale}
\end{figure}

\paragraph*{Combination with algebraic recompression.}\quad
We have seen that the Green hybrid method works fine with 
$\delta_t = \diam_\infty \left( \mathcal B_t \right) / 2$ as well as with 
$\delta_t = \diam_\infty \left( \mathcal B_t \right)$. 
It is also possible to use the weaker admissibility condition
\eqref{eq:weak_admiss}.
In order to obtain the best possible results the new method, we use
$\eta=2$, $\delta_t = \diam_\infty \left( \mathcal B_t \right)$ and apply
algebraic $\mathcal{H}^2$-recompression (cf. \cite[Section~6.6]{BO10})
to further reduce the storage requirements.
Table~\ref{tab:sphere_ghb_aca} shows the total time and size for different
resolutions of the unit sphere as well as the $L_2$-error observed for
our test functions.
We can see that the error converges at the expected rate for a piecewise
constant approximation, i.e., the matrix approximation is sufficiently
accurate.

For the ACA method, we also use algebraic recompression based on the
blockwise singular value decomposition and truncation.
Figure~\ref{img:sphere_ghb_aca} shows memory requirements and
compute times per degree of freedom for SLP and DLP matrices.
Since the nodal basis used by the DLP matrix requires three local basis
functions per triangle, the computational work is approximately three
times as high as for the piecewise constant basis.
Our implementation of ACA apparently reacts strongly to the higher
number of local basis functions required by the nodal basis, probably
due to the fact that ACA requires individual rows and columns and
cannot easily be optimized to take advantage of triangles shared among
different degrees of freedom.

\begin{table}[Ht!]
\begin{center}
\begin{tabular}{r|rr|rr|rr|rrr}
 &  &  & \multicolumn{2}{c}{SLP} & \multicolumn{2}{c}{DLP} & & & \\
$n$ & $m$ & $\epsilon_{ACA}$ & time & size  & time & size &$\epsilon_1$ &  $\epsilon_2$ &	$\epsilon_3$ \\
\hline
2048 & 2 & 5.0e-4 & 5 & 6 & 12 & 5 & 1.3e-1 & 2.4e-2 & 1.8e-1\\
8192 & 2 & 1.0e-4 & 23 & 30 & 65 & 25 & 6.3e-2 & 1.2e-2 & 9.0e-2\\
32768 & 2 & 1.0e-5 & 114 & 167 & 309 & 134 & 3.1e-2 & 5.6e-3 & 4.4e-2\\
131072 & 2 & 5.0e-6 & 470 & 751 & 1335 & 596 & 1.6e-2 & 2.9e-3 & 2.2e-2\\
524288 & 2 & 1.0e-6 & 2090 & 3692 & 6378 & 2936 & 7.8e-3 & 1.5e-3 & 1.1e-2
\end{tabular}
\caption{Unit sphere: Setup time in seconds, resulting size of SLP and DLP in megabytes
	 and absolute $L_2$-errors for different Dirichlet data using the 
	 new hybrid method. Order of quadrature for Green's formula is given
	 by $m$ and accuracy used by cross approximation and
         $\mathcal{H}^2$-recompression is given by $\epsilon_{ACA}$,
         $\delta_t = \diam_\infty(\mathcal{B}_t)$, $\eta = 2$ for both
         SLP and DLP.}
\label{tab:sphere_ghb_aca}
\end{center}
\end{table}

\begin{figure}[Ht!]
 \begin{center}
  \subfigure[Memory per degree of freedom]{
            \includegraphics[width=0.47\textwidth]{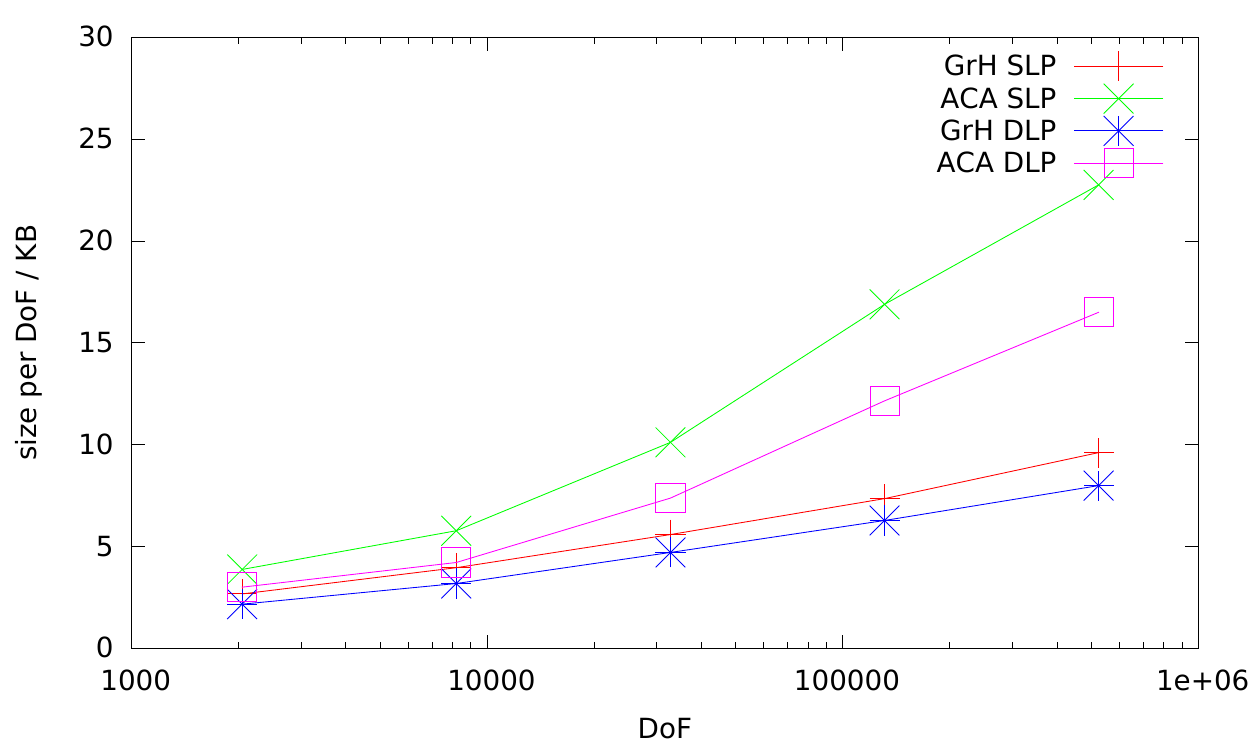}
        }
  \subfigure[Time per degree of freedom]{
            \includegraphics[width=0.47\textwidth]{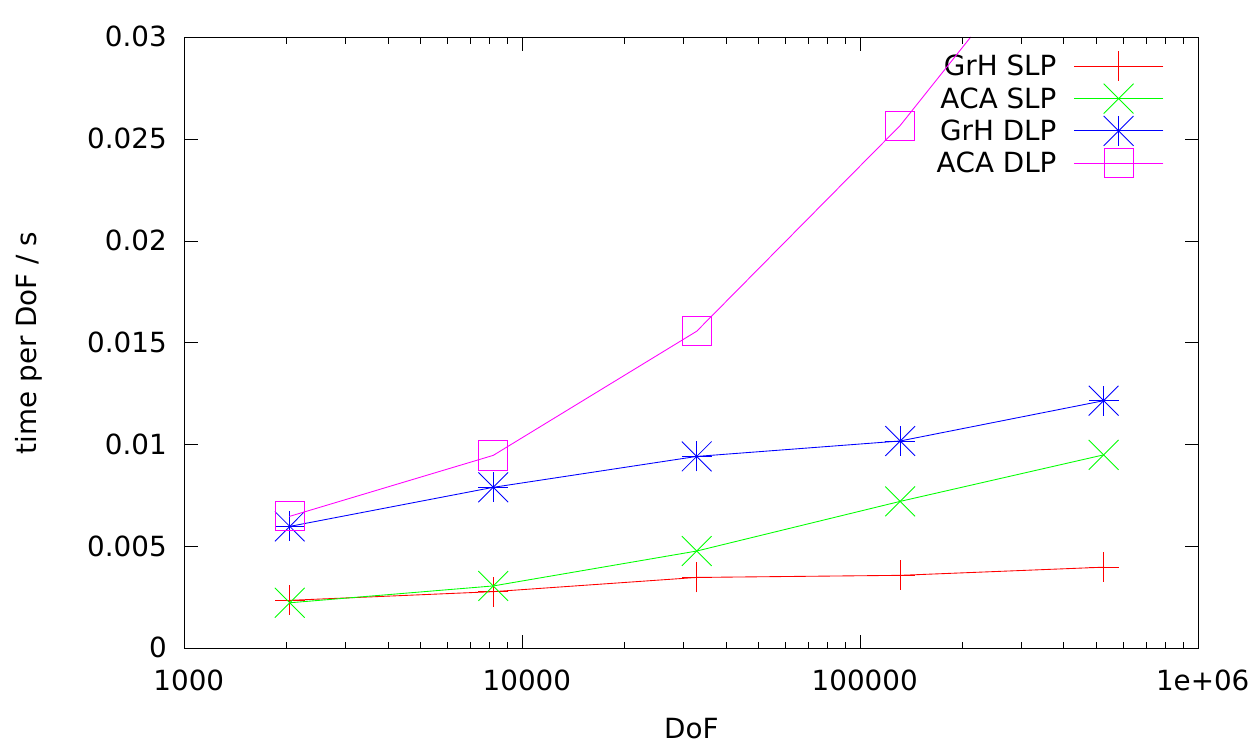}
        }
 \end{center}
 \caption{Unit sphere: memory consumption and setup time for SLP and DLP with 
	  $\delta_t = \diam_\infty \left( \mathcal B_t \right)$ and $\eta = 2$. In both cases
	  algebraic recompression techniques are included.}
 \label{img:sphere_ghb_aca}
\end{figure}

\paragraph*{Crankshaft.}\quad
Our new approximation technique is not only capable of handling
the simple unit sphere but also of more complex geometries such
as the well-known ``crankshaft'' geometry contained in the
the \emph{netgen} package of Joachim Sch\"oberl \cite{SC97}.
We have used the program to created meshes with 1748, 6992,
27968 and 111872 triangles.

In order to obtain $\mathcal{O}(h)$ convergence of the Neumann
data, we have to raise the nearfield quadrature order to 7
Gauss points per dimension for regular integrals and to 9
Gauss points for singular ones.
The results are shown in Table~\ref{tab:shaft_greenhybrid}.

Due to the significantly higher number of nearfield quadrature
points, the total computing time is far higher than for the
simple unit sphere.
We also have to choose $m=3$ for the Green quadrature method
and significantly lower error tolerances for the hybrid method
and the recompression in order to recover the discretization
error.
Except for these changes, the new method works as expected.

\begin{table}[Ht!]
\begin{center}
\begin{tabular}{r|rr|rr|rr|rrr}
 &  &  & \multicolumn{2}{c}{SLP} & \multicolumn{2}{c}{DLP} & & & \\
Dof  &   $m$ & $\epsilon_{ACA}$ & time & size  & time & size &$\epsilon_1$ &  $\epsilon_2$ & $\epsilon_3$ \\
\hline
1748 & 2 & 1.0e-5 & 149 & 13 & 346 & 9 & 8.8e-2 & 9.5e-3 & 3.6e-2\\
6992 & 3 & 1.0e-6 & 1471 & 107 & 3422 & 90 & 3.1e-2 & 4.2e-3 & 1.6e-2\\
27968 & 3 & 1.0e-7 & 10660 & 633 & 27410 & 722 & 1.2e-2 & 2.1e-3 & 7.7e-3\\
111872 & 3 & 1.0e-8 & 59790 & 2985 & 182200 & 3829 & 4.7e-3 & 1.2e-3 & 3.9e-3
\end{tabular}
\caption{Crankshaft: setup time in seconds, resulting size of SLP and DLP in megabytes
    and absolute $L^2$-errors for different Dirichlet data using the 
    new hybrid method. Order of quadrature for Green's formula is given
    by $m$ and accuracy used by ACA and $\mathcal H^2$-recompression is 
    given by $\epsilon_{ACA}$, $\delta_t = \diam_{\infty} 
    \left( \mathcal B_t \right)$, $\eta = 1$ for both SLP and DLP.}
\label{tab:shaft_greenhybrid}
\end{center}
\end{table}

\appendix
\section{Proofs of technical lemmas}
\label{se:appendix}

\emph{Proof of Lemma~\ref{le:omega_t}:}
By definition of the maximum norm, we can find $\iota\in\{1,\ldots,d\}$
such that $2\delta_t = \diam_\infty({\mathcal B}_t) =
b_{t,\iota}-a_{t,\iota}$ and $b_{t,\kappa}-a_{t,\kappa}\leq 2\delta_t$ holds
for all $\kappa\in\{1,\ldots,d\}$.
Most of our claims are direct consequences of this estimate, only the
last claim of (\ref{eq:diam_dist}) requires a closer look.
Let $x\in\partial\omega_t$ and $y\in{\mathcal F}_t$ be given with
\begin{equation*}
  \|x-y\|_\infty = \dist_\infty(\partial\omega_t, {\mathcal F}_t).
\end{equation*}
Let $\widehat x\in {\mathcal B}_t$ be a point in ${\mathcal B}_t$ that
has minimal distance to $x$.
By construction, we have
\begin{equation*}
  \|x-\widehat x\|_\infty \leq \delta_t,
\end{equation*}
and the triangle inequality in combination with (\ref{eq:farfield})
yields
\begin{align*}
  \dist_\infty(\partial\omega_t, {\mathcal F}_t)
  &= \|x-y\|_\infty
   \geq \|\widehat x-y\|_\infty - \|\widehat x-x\|_\infty
   \geq \dist_\infty({\mathcal B}_t, B_s) - \delta_t\\
  &\geq \diam_\infty({\mathcal B}_t) - \diam_\infty({\mathcal B}_t)/2
   = \diam_\infty({\mathcal B}_t)/2 = \delta_t,
\end{align*}
and this is the required estimate.
\qed

\medskip

\noindent
\emph{Proof of Lemma~\ref{le:derivatives}:}
Let $\kappa\in\bbbn_0^d$ be a multiindex.
We consider the function
\begin{align*}
  g_x : [-1,1]^d &\to \bbbr, &
        \hat z &\mapsto (\partial^\kappa g)(x,\Phi_t(\hat z))
\end{align*}
and aim to prove
\begin{align}\label{eq:gx_derivative}
  \partial^\nu g_x(\hat z)
  &= s^\nu (\partial^{\nu+\kappa} g)(x,\Phi_t(\hat z)) &
  &\text{ for all } \hat z\in [-1,1]^d,\ \nu\in\bbbn_0^d
\end{align}
with the vector
\begin{equation*}
  s := \begin{pmatrix}
    (b_{t,1}-a_{t,1}+2\delta_t)/2\\
    \vdots\\
    (b_{t,d}-a_{t,d}+2\delta_t)/2
  \end{pmatrix} \in\bbbr^d.
\end{equation*}
We proceed by induction:
for the multiindex $\nu=0$, the identity (\ref{eq:gx_derivative})
is trivial.

Let $m\in\bbbn_0$ and assume that (\ref{eq:gx_derivative}) has been
proven for all multiindices $\nu\in\bbbn_0^d$ with $|\nu|\leq m$.
Let $\nu\in\bbbn_0^d$ be a multiindex with $|\nu|=m+1$.
Then we can find $\mu\in\bbbn_0^d$ with $|\mu|=m$ and $i\in\{1,\ldots,d\}$
such that $\nu=(\mu_1,\ldots,\mu_{i-1},\mu_i+1,\mu_{i+1},\ldots,\mu_d)$.
This implies
\begin{align*}
  \partial^\nu g_x(\hat z)
  &= \frac{\partial}{\partial \hat z_i} (\partial^{\mu+\kappa} g_x)(\hat z) &
  &\text{ for all } \hat z\in [-1,1]^d.
\end{align*}
Applying the induction assumption and the chain rule yields
\begin{align*}
  \partial^\nu g_x(\hat z)
  &= \frac{\partial}{\partial \hat z_i} \partial^\mu g_x(\hat z)
   = \frac{\partial}{\partial \hat z_i} s^\mu
           (\partial^{\mu+\kappa} g)(x,\Phi_t(\hat z))\\
  &= s^\mu \frac{b_{t,i}-a_{t,i}+2\delta_t}{2}
           \left(\frac{\partial}{\partial \hat z_i} \partial^{\mu+\kappa} g\right)
             (x,\Phi_t(\hat z))\\
  &= s^\nu (\partial^{\nu+\kappa} g)(x,\Phi_t(\hat z))
       \qquad\text{ for all } \hat z\in[-1,1]^d
\end{align*}
since $D\Phi_t$ is a diagonal matrix due to (\ref{eq:Phi_t}).
The induction is complete.

By definition (\ref{eq:parametrization}), we have
\begin{align*}
  \gamma_\iota(\hat z)
  &= \Phi_t(\hat z_1,\ldots,\hat z_{\lceil \iota/2 \rceil-1}, \pm 1,
            \hat z_{\lceil \iota/2 \rceil}, \ldots, \hat z_{d-1}),
\end{align*}
therefore (\ref{eq:gx_derivative}) implies
\begin{subequations}\label{eq:gamma_derivatives}
\begin{align}
  \partial^{\hat\nu} \hat f_1(\hat z)
  &= s^\nu (\partial_y^\nu g)(x,\gamma_\iota(\hat z)) &
  &\text{ for all } \hat z\in Q,\ \hat\nu\in\bbbn_0^{d-1}
\end{align}
with
\begin{equation*}
  \nu := (\hat\nu_1, \ldots, \hat\nu_{\lceil \iota/2\rceil-1}, 0,
          \hat\nu_{\lceil \iota/2\rceil}, \ldots, \hat\nu_{d-1}).
\end{equation*}
The exterior normal vector on the surface $\gamma_\iota(Q)$ is
the $\iota/2$-th canonical unit vector if $\iota$ is even and
the negative $(\iota+1)/2$-th canonical unit vector if it is uneven,
so we obtain also
\begin{align}
  \partial^{\hat\nu} \hat g_2(\hat z)
  &= \pm s^\nu (\partial_y^{\nu+\kappa} g)(x,\gamma_\iota(\hat z)) &
  &\text{ for all } \hat z\in Q,\ \hat\nu\in\bbbn_0^{d-1},
\end{align}
where $\kappa$ is the $\lceil \iota/2 \rceil$-th canonical unit
vector in $\bbbn_0^d$.

Exchanging the roles of $x$ and $y$ in these arguments yields
\begin{align}
  \partial^{\hat\nu} \hat f_2(\hat z)
  &= s^\nu (\partial_x^\nu g)(\gamma_\iota(\hat z),y),\\
  \partial^{\hat\nu} \hat g_1(\hat z)
  &= \pm s^\nu (\partial_x^{\nu+\kappa} g)(\gamma_\iota(\hat z),y) &
  &\text{ for all } \hat z\in Q,\ \hat\nu\in\bbbn_0^{d-1}.
\end{align}
\end{subequations}
Now we only have to combine the equations (\ref{eq:gamma_derivatives})
with
\begin{align*}
  |s^\nu| &\leq (2\delta_t)^{|\nu|} &
  &\text{ for all } \nu\in\bbbn_0^d
\end{align*}
and the asymptotic smoothness (\ref{eq:asymptotically_smooth}) to
obtain
\begin{align*}
  |\partial^{\hat\nu} \hat f_1(\hat z)|
  &\leq (2\delta_t)^{|\hat\nu|} C_{\rm as} \hat\nu!
        \frac{c_0^{|\hat\nu|}}
             {\|x-\gamma_\iota(\hat z)\|^{\sigma+|\hat\nu|}}\\
  &\leq \frac{C_{\rm as} \hat\nu!}{\delta_t^\sigma}
        \left( \frac{2 \delta_t c_0}{\delta_t} \right)^{|\hat\nu|}
   = \frac{C_{\rm as} \hat\nu!}{\delta_t^\sigma} (2 c_0)^{|\hat\nu|},\\
  |\partial^{\hat\nu} \hat f_2(\hat z)|
  &\leq (2\delta_t)^{|\hat\nu|} C_{\rm as} \hat\nu!
        \frac{c_0^{|\hat\nu|}}
             {\|\gamma_\iota(\hat z)-y\|^{\sigma+|\hat\nu|}}\\
  &\leq \frac{C_{\rm as} \hat\nu!}{\delta_t^\sigma}
        \left( \frac{2 \delta_t c_0}{\delta_t} \right)^{|\hat\nu|}
   = \frac{C_{\rm as} \hat\nu!}{\delta_t^\sigma} (2 c_0)^{|\hat\nu|},\\
  |\partial^{\hat\nu} \hat g_1(\hat z)|
  &\leq (2\delta_t)^{|\hat\nu|} C_{\rm as} \hat\nu!
        \frac{c_0^{|\hat\nu|+1}}
             {\|x-\gamma_\iota(\hat z)\|^{\sigma+1+|\hat\nu|}}\\
  &\leq \frac{C_{\rm as} c_0 \hat\nu!}{\delta_t^{\sigma+1}}
        \left( \frac{2 \delta_t c_0}{\delta_t} \right)^{|\hat\nu|}
   = \frac{C_{\rm as} \hat\nu!}{\delta_t^{\sigma+1}} (2 c_0)^{|\hat\nu|},\\
  |\partial^{\hat\nu} \hat g_2(\hat z)|
  &\leq (2\delta_t)^{|\hat\nu|} C_{\rm as} \hat\nu!
        \frac{c_0^{|\hat\nu|+1}}
             {\|\gamma_\iota(\hat z)-y\|^{\sigma+1+|\hat\nu|}}\\
  &\leq \frac{C_{\rm as} c_0 \hat\nu!}{\delta_t^{\sigma+1}}
        \left( \frac{2 \delta_t c_0}{\delta_t} \right)^{|\hat\nu|}
   = \frac{C_{\rm as} \hat\nu!}{\delta_t^{\sigma+1}} (2 c_0)^{|\hat\nu|},
\end{align*}
where Lemma~\ref{le:omega_t} provides the lower bounds
$\delta_t\leq\|x-\gamma_\iota(\hat z)\|$ and
$\delta_t\leq\|\gamma_\iota(\hat z)-y\|$ and we have taken advantage
of $(\nu+\kappa)!=\nu!=\hat\nu!$ due to $\nu_{\lceil\iota/2\rceil}=0$.
\qed

\bibliographystyle{plain}
\bibliography{scicomp}

\begin{thebibliography}{10}

\bibitem{AN92}
C.~R. Anderson.
\newblock An implementation of the fast multipole method without multipoles.
\newblock {\em SIAM J. Sci. Stat. Comp.}, 13:923--947, 1992.

\bibitem{BE00a}
M.~Bebendorf.
\newblock Approximation of boundary element matrices.
\newblock {\em {N}umer. {M}ath.}, 86(4):565--589, 2000.

\bibitem{BEGR06}
M.~Bebendorf and R.~Grzhibovskis.
\newblock Accelerating {G}alerkin {BEM} for linear elasticity using adaptive
  cross approximation.
\newblock {\em Math. Meth. Appl. Sci.}, 29:1721--1747, 2006.

\bibitem{BERJ01}
M.~Bebendorf and S.~Rjasanow.
\newblock {A}daptive {L}ow-{R}ank {A}pproximation of {C}ollocation {M}atrices.
\newblock {\em Computing}, 70(1):1--24, 2003.

\bibitem{BEVE12}
M.~Bebendorf and R.~Venn.
\newblock Constructing nested bases approximations from the entries of
  non-local operators.
\newblock {\em Num. Math.}, 121(4):609--635, 2012.

\bibitem{BO10}
S.~B{\"o}rm.
\newblock {\em Efficient Numerical Methods for Non-local Operators: {${\mathcal
  H}^2$}-Matrix Compression, Algorithms and Analysis}, volume~14 of {\em EMS
  Tracts in Mathematics}.
\newblock EMS, 2010.

\bibitem{BOGO12}
S.~{B\"orm} and J.~{G\"ordes}.
\newblock Low-rank approximation of integral operators by using the {Green}
  formula and quadrature.
\newblock {\em Numerical Algorithms}, 64(3):567--592, 2013.

\bibitem{BOGR04}
S.~B{\"o}rm and L.~Grasedyck.
\newblock Hybrid cross approximation of integral operators.
\newblock {\em Numer. Math.}, 101:221--249, 2005.

\bibitem{BOGRHA03a}
S.~B{\"o}rm, L.~Grasedyck, and W.~Hackbusch.
\newblock {H}ierarchical {M}atrices.
\newblock Lecture Note 21 of the Max Planck Institute for Mathematics in the
  Sciences, 2003.

\bibitem{BOHA02}
S.~B{\"o}rm and W.~Hackbusch.
\newblock Data-sparse approximation by adaptive {${\mathcal{H}}^2$}-matrices.
\newblock {\em Computing}, 69:1--35, 2002.

\bibitem{BOHA02a}
S.~B{\"o}rm and W.~Hackbusch.
\newblock {${\mathcal{H}}^2$}-matrix approximation of integral operators by
  interpolation.
\newblock {\em Appl. Numer. Math.}, 43:129--143, 2002.

\bibitem{BR91}
A.~Brandt.
\newblock Multilevel computations of integral transforms and particle
  interactions with oscillatory kernels.
\newblock {\em Comput. Phys. Comm.}, 65(1--3):24--38, 1991.

\bibitem{BRLU90}
A.~Brandt and A.~A. Lubrecht.
\newblock Multilevel matrix multiplication and fast solution of integral
  equations.
\newblock {\em J. Comput. Phys.}, 90:348--370, 1990.

\bibitem{DAPRSC94b}
W.~Dahmen, S.~Pr{\"o}ssdorf, and R.~Schneider.
\newblock Wavelet approximation methods for pseudodifferential equations {I}:
  {S}tability and convergence.
\newblock {\em Math. Z.}, 215:583--620, 1994.

\bibitem{DASC99}
W.~Dahmen and R.~Schneider.
\newblock Wavelets on manifolds {I}: {C}onstruction and domain decomposition.
\newblock {\em SIAM J. Math. Anal.}, 31:184--230, 1999.

\bibitem{GI01}
K.~Giebermann.
\newblock {M}ultilevel approximation of boundary integral operators.
\newblock {\em Computing}, 67:183--207, 2001.

\bibitem{GOTYZA97}
S.~A. Goreinov, E.~E. Tyrtyshnikov, and N.~L. Zamarashkin.
\newblock A theory of pseudoskeleton approximations.
\newblock {\em Lin. Alg. Appl.}, 261:1--22, 1997.

\bibitem{GRHA02}
L.~Grasedyck and W.~Hackbusch.
\newblock Construction and arithmetics of {${\mathcal{H}}$}-matrices.
\newblock {\em Computing}, 70:295--334, 2003.

\bibitem{GRRO87}
L.~Greengard and V.~Rokhlin.
\newblock A fast algorithm for particle simulations.
\newblock {\em J. Comp. Phys.}, 73:325--348, 1987.

\bibitem{HA92}
W.~Hackbusch.
\newblock {\em {E}lliptic {D}ifferential {E}quations. {T}heory and {N}umerical
  {T}reatment}.
\newblock Springer-Verlag Berlin, 1992.

\bibitem{HA99}
W.~Hackbusch.
\newblock A sparse matrix arithmetic based on $\mathcal{H}$-matrices. {P}art
  {I}: {I}ntroduction to $\mathcal{H}$-matrices.
\newblock {\em Computing}, 62:89--108, 1999.

\bibitem{HA09}
W.~Hackbusch.
\newblock {\em {H}ierarchische {M}atrizen --- {A}lgorithmen und {A}nalysis}.
\newblock Springer, 2009.

\bibitem{HAKH00}
W.~Hackbusch and B.~N. Khoromskij.
\newblock A sparse matrix arithmetic based on $\mathcal{H}$-matrices. {P}art
  {II}: {A}pplication to multi-dimensional problems.
\newblock {\em Computing}, 64:21--47, 2000.

\bibitem{HANO89}
W.~Hackbusch and Z.~P. Nowak.
\newblock On the fast matrix multiplication in the boundary element method by
  panel clustering.
\newblock {\em Numer. Math.}, 54:463--491, 1989.

\bibitem{HASC06}
H.~Harbrecht and R.~Schneider.
\newblock Wavelet {G}alerkin schemes for boundary integral equations --
  {I}mplementation and quadrature.
\newblock {\em SIAM J. Sci. Comput.}, 27:1347--1370, 2006.

\bibitem{HSWE08}
G.~C. Hsiao and W.~L. Wendland.
\newblock {\em Boundary Integral Equations}.
\newblock Number 164 in Appl. Math. Sci. Springer, 2008.

\bibitem{MAMITI08}
R.~Maaskant, R.~Mittra, and A.~Tijhuis.
\newblock Fast analysis of large antenna arrays using the characteristic basis
  function method and the adaptive cross approximation algorithm.
\newblock {\em IEEE Trans. Ant. Prop.}, 56(11):3440--3451, 2008.

\bibitem{RO85}
V.~Rokhlin.
\newblock Rapid solution of integral equations of classical potential theory.
\newblock {\em J. Comp. Phys.}, 60:187--207, 1985.

\bibitem{SA96}
S.~A. Sauter.
\newblock Cubature techniques for 3-d {G}alerkin {BEM}.
\newblock In W.~Hackbusch and G.~Wittum, editors, {\em Boundary Elements:
  Implementation and Analysis of Advanced Algorithms}, pages 29--44.
  Vieweg-Verlag, 1996.

\bibitem{SASC04}
S.~A. Sauter and C.~Schwab.
\newblock {\em Randelementmethoden}.
\newblock Teubner, 2004.

\bibitem{SC97}
J.~{Sch\"oberl}.
\newblock {NETGEN} --- {An} advancing front {2D}/{3D}-mesh generator based on
  abstract rules.
\newblock {\em Comp. Vis. Sci.}, 1(1):41--52, 1997.

\bibitem{TAHERI11}
J.~M. Tamayo, A.~Heldring, and J.~M. Rius.
\newblock Multilevel adaptive cross approximation.
\newblock {\em IEEE Trans. Ant. Prop.}, 59(12):4600--4608, 2011.

\bibitem{TY96}
E.~E. Tyrtyshnikov.
\newblock Mosaic-skeleton approximation.
\newblock {\em {C}alcolo}, 33:47--57, 1996.

\bibitem{TY99}
E.~E. Tyrtyshnikov.
\newblock Incomplete cross approximation in the mosaic-skeleton method.
\newblock {\em Computing}, 64:367--380, 2000.

\bibitem{BIYIZO04}
L.~Ying, G.~Biros, and D.~Zorin.
\newblock A kernel-independent adaptive fast multipole algorithm in two and
  three dimensions.
\newblock {\em J. Comp. Phys.}, 196(2):591--626, 2004.

\end{thebibliography}

\end{document}